\def\IR{\mathbb R}
\newcommand{\bfn}{\boldsymbol n}
\newcommand{\bfI}{\boldsymbol I}
\newcommand{\bfP}{\boldsymbol P}
\newcommand{\bfx}{\boldsymbol x}
\newcommand{\bfy}{\boldsymbol y}
\newcommand{\bfz}{\boldsymbol z}
\newcommand{\bfa}{\boldsymbol a}
\newcommand{\bfb}{\boldsymbol b}
\newcommand{\bfp}{\boldsymbol p}
\newcommand{\bfB}{\boldsymbol B}
\newcommand{\bfkappa}{\boldsymbol \kappa}
\newcommand{\mcV}{{V}}
\newcommand{\mcK}{\mathcal{K}}
\newcommand{\mcN}{\mathcal{N}}
\newcommand{\mcF}{\mathcal{F}}
\newcommand{\mcA}{\mathcal{A}}
\newcommand{\mcX}{\mathcal{X}}
\newcommand{\tn}{|\mspace{-1mu}|\mspace{-1mu}|}
\numberwithin{equation}{section}
\newtheorem{lem}{Lemma}[section]
\newtheorem{thm}{Theorem}[section]
\newenvironment{proof}{\noindent \newline {\bf Proof.}}
{\hfill \mbox{\fbox{} } \newline}
\newcommand{\nablas}{\nabla_\Sigma}
\newcommand{\nablash}{\nabla_{\Sigma_h}}
\newcommand{\bfPs}{{\boldsymbol P}_\Sigma}
\newcommand{\bfPsh}{{\boldsymbol P}_{\Sigma_h}}
\newcommand{\ds}{d \sigma}
\newcommand{\dsh}{d \sigma_h}
\begin{document}
\title{A Stabilized Cut Finite Element Method for Partial Differential Equations on Surfaces: The Laplace-Beltrami Operator}
\author[UCL]{Erik Burman}
\author[HJ]{Peter Hansbo}
\author[UMU]{Mats G.\ Larson}
\address[UCL]{Department of Mathematics, University College London, London, UK--WC1E 6BT, United Kingdom} 
\address[HJ]{Department of Mechanical Engineering, J\"onk\"oping University,
SE-55111 J\"onk\"oping, Sweden} 
\address[UMU]{Department of Mathematics and Mathematical Statistics, Ume{\aa} University, 
SE-901 87 Ume{\aa}, Sweden} 
\begin{abstract}
We consider solving the Laplace-Beltrami problem on a smooth two 
dimensional surface embedded into a three dimensional space meshed 
with tetrahedra. The mesh does not respect the surface and thus the 
surface cuts through the elements. We consider a Galerkin method 
based on using the restrictions of continuous piecewise linears 
defined on the tetrahedra to the surface as trial and test functions.

The resulting discrete method may be severely ill-conditioned, and 
the main purpose of this paper is to suggest a remedy for this 
problem based on adding a consistent stabilization term to the 
original bilinear form. We show optimal estimates for the condition 
number of the stabilized method independent of the location of the 
surface. We also prove optimal a priori error estimates for the 
stabilized method.
\end{abstract}
\begin{keyword}
{Laplace--Beltrami, 
embedded surface, tangential calculus.}
\end{keyword}
\maketitle

\section{Introduction}

We consider solving the Laplace-Beltrami problem on a smooth two 
dimensional surface embedded into a three dimensional space partitioned 
into a mesh consisting of shape regular tetrahedra. The mesh does not 
respect the surface and thus the surface cuts through the elements. Following 
Olshanskii, Reusken, and Grande \cite{OlReGr09} we construct a Galerkin method 
by using the restrictions of continuous piecewise linears defined on the tetrahedra 
to the surface.

The resulting discrete method may be severely ill-conditioned and 
the main purpose of this paper is to suggest a remedy for this problem 
based on adding a consistent stabilization term to the original bilinear form. The stabilization 
term we consider here controls jumps in the normal gradient on the faces 
of the tetrahedra and provides a certain control of the derivative of the discrete functions 
in the direction normal to the surface. Similar terms have recently been used for 
stabilization of cut finite element methods for fictitious domain methods \cite{BuHa10}, \cite{BuHa12}, \cite{HaLaZa14}, and \cite{MaLaLoRo12}. Note that 
none of these references involve any partial differential equations on surfaces only regular boundary and interface conditions.

In principle, it is possible, in this situation, to deal with the ill conditioning problem in the linear algebra using a scaling, see \cite{OlRe10}. Starting from a stable method has clear advantages in more complex applications that may need stabilization anyway, such as problems with hyperbolic character or coupled bulk-surface problems. It is also not 
clear that matrix based algebraic scaling procedures is possible in all 
situations and thus alternative approaches must be investigated.

Using the additional stability we first prove an optimal estimate 
for the condition number, independent of the location of the surface, 
in terms of the mesh size of the underlying tetrahedra. The key step 
in the proof is certain discrete Poincar\'e estimates that are also 
of general interest. Then we prove a priori error estimates in the 
energy and $L^2$ norms.  

In a companion paper, we will consider the  more challenging 
problems of the surface Helmholtz equation and show error estimates for a stabilized method under a suitable condition on the product of the mesh size and the wave number.

Finally, we refer to \cite{DeDzElHe10}, \citep{Dz88}, \cite{DzEl08}, and \cite{DzEl13} for general background on 
finite element methods for partial differential equations on surfaces.

The outline of the reminder of this paper is as follows: In Section 
2 we formulate the model problem and the finite element method, in 
Section 3 we summarize some preliminary results involving lifting of 
functions from the discrete surface to the continuous surface,
in Section 4 we prove an optimal bound on the condition number of 
the stabilized method, in Section 5 we prove a priori error estimates 
in the energy and $L^2$ norms, and finally in Section 6 we present 
numerical investigations confirming our theoretical results.

\section{Model Problem and Finite Element Method}

\subsection{The Continuous Surface}

Let $\Sigma$ be a smooth $d-1$-dimensional closed surface embedded in ${{\IR}}^d$, $d=2$ or $3$, with signed distance function $\rho$ such 
that the exterior surface unit normal is given by $\bfn=\nabla \rho$. 
Let $\bfp(\bfx)$ be the nearest point projection mapping onto $\Sigma$, i.e., $\bfp(\bfx)$ is the point on $\Sigma$ that minimizes the Euclidian 
distance to $\bfx$. For $\delta>0$ let $U_\delta(\Sigma)$ be the tubular neighborhood $U_\delta(\Sigma) = \{ \bfx \in \IR \,:\, |\rho(\bfx)| < \delta \}$ of $\Sigma$. Then $\bfp(\bfx) = \bfx - \rho(\bfx) \bfn(\bfp(\bfx))$ and there 
is a $\delta_0>0$ such that for each $\bfx \in U_{\delta_0}(\Sigma)$ 
there is a unique $\bfp(\bfx) \in \Sigma$.  Using $\bfp$ we may extend 
any function $v$ defined on $\Sigma$ to $U_{\delta_0}(\Sigma)$ by defining 
\begin{equation}\label{extension}
v^e(\bfx) = v \circ \bfp (\bfx), \quad \bfx \in U_{\delta_0}(\Sigma)
\end{equation}

\subsection{The Continuous Problem}

We consider the following problem: find $u: \Sigma \rightarrow {{\IR}}$ 
such that
\begin{align}\label{eq:LB}
-\Delta_\Sigma u = f \quad \text{on $\Sigma$}
\end{align}
where $f$ is a given function such that $\int_\Sigma f = 0$. Here $\Delta_\Sigma$ is the Laplace-Beltrami operator defined by
\begin{equation}
\Delta_\Sigma = \nabla_\Sigma \cdot \nabla_\Sigma
\end{equation}
where $\nabla_\Sigma$ is the tangent gradient
\begin{equation}
\nabla_\Sigma = \bfPs \nabla
\end{equation}
with $\bfPs = \bfPs(\bfx)$ the projection of $\IR^3$ onto the tangent 
plane of $\Sigma$ at $\bfx\in\Sigma$, defined by
\begin{equation}
\bfPs = \bfI -\bfn \otimes \bfn
\end{equation}
where $\bfI$ is the identity matrix, and $\nabla$ the ${{\IR}}^3$ 
gradient. 

Let $(v,w)_\omega = \int_\omega v w$ and $\|v\|_\omega = (v,v)_\omega$ 
be the $L^2(\omega)$ inner product and norm on the set $\omega$ 
equipped with the appropriate measure. Let $H^m(\Sigma), m = 0, 1, 2$ 
be the Sobolev spaces on $\Sigma$ with norm 
\begin{equation}
\| w \|^2_{m,\Sigma} = \sum_{s=0}^m \| (D_\Sigma^P)^s w \|_\Sigma^2
\end{equation} 
where
\begin{gather}
(D_\Sigma^P)^0 w=w,
\quad (D_\Sigma^P)^1 w=\nabla_\Sigma w 
\\
(D_\Sigma^P)^2 w= \bfPs ((\nabla_\Sigma w)\otimes \nabla_\Sigma) 
=\bfPs (\nabla \otimes \nabla w) \bfPs  
\end{gather}
and the $L^2$ norm for a matrix is based on the pointwise 
Frobenius norm. We then have the weak problem: find 
$u \in H^1(\Sigma)/\IR$ such that
\begin{equation}
a(u,v) = l(v) \quad \forall v \in H^1(\Sigma)/\IR
\end{equation}
where
\begin{equation}
a(u,v) = (\nabla_\Sigma u, \nabla_\Sigma v)_\Sigma, \quad l(v) = (f,v)_\Sigma
\end{equation}
It follows from the Lax-Milgram lemma that the weak problem has 
a unique solution for $f\in H^{-1}(\Sigma)$ such that 
$\int_\Sigma f = 0$. For smooth surfaces we also have the elliptic regularity estimate
\begin{equation}
\| u \|_{2,\Sigma} \lesssim \|f\|_\Sigma
\end{equation}
Here and below $\lesssim$ denotes less or equal up to a positive 
constant.

\subsection{Approximation of the Surface}

Let $\mcK_{h,0}$ be a quasi uniform partition into shape regular 
tetrahedra for $d=3$ and triangles for $d=2$ with mesh parameter 
$h$ of a polygonal domain $\Omega_0$ in $\IR^d$ completely 
containing $U_{\delta_0}(\Sigma)$. 
Let $\mcV_{h,0}$ be the space of continuous piecewise linear 
polynomials defined on $\mcK_{h,0}$. Let $\rho_h\in \mathcal{V}_{h,0}$ 
be an approximation of the distance function $\rho$ and let 
$\Sigma_h$ be the zero levelset
\begin{equation}
\Sigma_h = \{ \bfx \in \Omega_0 : \rho_h(\bfx) = 0 \}
\end{equation}
Then $\Sigma_h$ is piecewise linear and we define the exterior 
normal $\bfn_h$ to be the exact exterior unit normal to 
$\Sigma_h$. We consider a family of such surfaces 
$\{\Sigma_h : 0 < h \leq h_0\}$ such that (a) 
$\Sigma_h \subset U_{\delta_0}(\Sigma)$, 
(b) the closest point mapping $\bfp:\Sigma_h \rightarrow \Sigma$, 
is a bijection, and (c) the following estimates 
hold
\begin{equation}\label{eq:rhobounds}
\| \rho \|_{L^\infty(\Sigma_h)} \lesssim h^2, 
\qquad \| \bfn^e - \bfn_h \|_{L^\infty(\Sigma_h)} \lesssim  h
\end{equation}
for $0<h \leq h_0$. These properties are, for instance, satisfied 
if $\rho_h$ is the Lagrange interpolant of $\rho$ and $h_0$ is small 
enough.

%

\subsection{The Finite Element Method}

Let
\begin{equation}
\mcK_h = \{ K \in \mcK_{h,0} : \overline{K} \cap \Sigma_h \neq \phi \}, \quad \Omega_h = \cup_{K \in \mcK_h} K
\end{equation}
and 
\begin{equation}
\mcV_h = \{ v \in V_{h,0}|_{\Omega_h} \,:\, \int_{\Sigma_h} v = 0 \}
\end{equation}
be the continuous piecewise linear functions defined on $\mcK_h$ with 
average zero. The finite element method 
on $\Sigma_h$ takes the form: find $u_h \in \mcV_h$ such that
\begin{equation}\label{eq:fem}
A_h(u_h,v) = l_h(v) \quad \forall v \in \mcV_h
\end{equation}
Here the bilinear form $A_h(\cdot, \cdot)$ is defined by
\begin{equation}
A_h(v,w) = a_h(v,w) + j_h(v,w) \quad \forall v,w \in \mcV_h
\end{equation}
with
\begin{equation}
a_h(v,w) = (\nabla_{\Sigma_h} v, \nabla_{\Sigma_h} w)_{\Sigma_h}
\end{equation}
and
\begin{equation}
j_h(v,w) = \sum_{F \in \mcF_h} (\tau_0[\bfn_F \cdot \nabla v],[\bfn_F \cdot \nabla w])_F
\end{equation}
where $\mcF_h$ denotes the set of internal interfaces in $\mcK_h$, 
$[\bfn_F \cdot \nabla v] = (\bfn_F \cdot \nabla v)^+ 
- (\bfn_F \cdot \nabla v)^-$ with 
$w(\bfx)^\pm = \lim_{t\rightarrow 0^+} w(\bfx \mp t \bfn_F)$, is 
the jump in the normal gradient across the face $F$,  $\bfn_F$ 
denotes a fixed unit normal to the face $F\in \mcF_h$, and $\tau_0$ is a constant of $O(1)$. The tangent 
gradients are defined using the normal to the discrete surface
\begin{equation}
\nabla_{\Sigma_h} v = \bfPsh \nabla v = ( \bfI - \bfn_h \otimes \bfn_h ) \nabla v 
\end{equation}
and the right hand side is given by
\begin{equation}
l_h(v) = ( f^e, v )_{\Sigma_h}
\end{equation}
Introducing the mesh dependent norm
\begin{equation}
\tn v \tn^2_{h}  = \tn v \tn^2_{\Sigma_h} + \tn v \tn^2_{\mcF_h}
\end{equation}
where
\begin{equation}
\tn v \tn^2_{\Sigma_h} = \| \nabla_{\Sigma_h} v \|_{\Sigma_h}^2,
\qquad 
\tn v \tn^2_{\mcF_h} = j_h(v,v)
\end{equation}
we note that $\tn v \tn^2_{h}$ is indeed a norm on $\mcV_h$ 
(for fixed $\Sigma_h$) since if $\tn v \tn_h = 0$ we have $v=0$. 
To see this we first note that if $\tn v \tn_{\mcF_h}=0$ then 
$v$ must be a linear polynomial $v(\bfx)=\bfa\cdot (\bfx-\bfx_{\Sigma_h}) 
+ b$ with $\bfa \in \IR^d$, $b\in \IR$, and $\bfx_{\Sigma_h}\in \IR^d$ 
the center of gravity of $\Sigma_h$, here $b=0$ since 
$0=\int_{\Sigma_h} v = b$, secondly if $0=\tn v \tn_{\mcF_h} = \|\bfPsh \bfa \|_{\Sigma_h}$ and thus $\bfa=\boldsymbol{0}$ since $\Sigma_h$ 
is a closed surface and thus $\bfa$ cannot be normal to $\Sigma_h$ everywhere. Thus it follows from the Lax-Milgram lemma that there 
exists a unique solution to (\ref{eq:fem}).

\section{Preliminary Results}
In this section we collect some essentially standard results, 
see \cite{De09}, \cite{DeDz07}, and \cite{Dz88},
related to lifting of functions from the discrete surface 
to the exact surface.

\subsection{Lifting to the Exact Surface}
For each $K \in \mcK_h$ we let $\rho_{h,K}$ be the distance function to the 
hyperplane, with normal $\bfn_{h,K}$, that contains $K \cap \Sigma_h$ and 
$\bfp_{h,K}$ be the associated nearest point projection 
$\bfp_{h,K}(\bfx) = \bfx - \rho_{h,K}(\bfx) \bfn_{h,K}$ onto the hyperplane. 
Then we define the mapping
\begin{equation}
G_K: K \ni \bfx \mapsto \bfp \circ \bfp_{h,K}(\bfx) + \bfn(\bfx) \rho_{h,K}(\bfx) \in K^l 
\end{equation}
where $K^l = G_K(K)$ and we defined $\bfn(\bfx) = \nabla \rho(\bfx) = \bfn^e(\bfx)$ for $\bfx \in U_{\delta_0}(\Sigma)$. We note that $G_K$ 
is an invertible mapping, $G_K(K \cap \Sigma_h) = G_K(K) \cap \Sigma$, 
and $\{K^l \cap \Sigma : K^l = G_K(K), K\in \mcK_h \}$ is a partition of $\Sigma$. The derivative $DG_K$ of $G_K$ is given by
\begin{equation}
DG_K = (\bfPs - \rho \bfkappa ) \bfPsh + \bfn \otimes \bfn_h 
+ \rho_{h,K} \bfkappa ({\bfPs} - \rho \bfkappa )
\end{equation}
where we used the identity $\bfp \otimes \nabla = \bfPs - \rho \bfkappa$. Here $\bfkappa = \nabla\otimes\nabla \rho$ and we have the identity 
\begin{equation}
\bfkappa = \sum_{i=1}^2 \kappa_i^e (1 + \rho \kappa_i^e)^{-1} \bfa_i^e\otimes \bfa_i^e
\end{equation}
where $\kappa_i$ are the principal curvatures of $\Sigma$ with 
corresponding principal curvature vectors $\bfa_i$, see \cite{GiTr83} 
Lemma 14.7. Thus we note that $\bfkappa$ is tangential to $\Sigma$ and that for $\delta_0$ small enough we have the estimate 
$\|\bfkappa\|_{L^\infty(U_{\delta_0}(\Sigma))} \lesssim 1$. 

In particular, on $\Sigma_h$ we have $\rho_{h,K}(\bfx) = 0$ and thus we obtain the simplified expression
\begin{equation}
DG_K = (\bfI - \rho \bfkappa ) (\bfPs \bfPsh + \bfn \otimes \bfn_h )
\end{equation}
where we used the fact that $\bfkappa$ is a tangential tensor to 
$\Sigma$, i.e. $\bfkappa \bfPs = \bfkappa$. The mapping $DG_K$ maps 
the tangent and normal spaces of $\Sigma_h$ at $\bfx \in K \cap \Sigma_h$ onto the tangent and normal spaces of $\Sigma$ at $\bfp(\bfx)$.

We define the lift $v^l$ of a function $v \in H^1(K)$ to $H^1(K^l)$ by 
\begin{equation}
v^l \circ G_K = v
\end{equation}

\subsection{Tangent Gradients of Lifted Functions}

The tangent gradient of $v^l$ is given by 
\begin{equation}
\nablas v^l = \bfPs \nabla v^l = \bfPs DG_K^{-T} \nabla v = 
\bfPs (\bfI - \rho \bfkappa)^{-T} ({\bfPs} \bfPsh + \bfn \otimes \bfn_h )^{-T} \nabla v
\end{equation}
Using the fact that $\bfI - \rho {\bfkappa}$ and  ${\bfPs} \bfPsh + {\bfn} \otimes \bfn_h$ 
preserves the normal and tangent directions we finally obtain 
the identity
\begin{equation}
\nablas v^l = 
(\bfI - \rho {\bfkappa} )^{-T} ({\bfPs} \bfPsh + {\bfn} \otimes \bfn_h )^{-T} \bfPsh \nabla v
= \bfB^{-T} \nabla_{\Sigma_h} v
\end{equation}
where we introduced the notation 
\begin{equation}
\bfB = (\bfI - \rho \bfkappa) (\bfPs \bfPsh + \bfn \otimes \bfn_h )
\end{equation}
Here $\bfI - \rho {\bfkappa}$ is a symmetric matrix with eigenvalues 
$\{1,(1+\rho\kappa^e_1)^{-1}, (1+\rho\kappa^e_2)^{-1}\}$,
which are all strictly greater than zero on $U_{\delta_0}(\Sigma)$ 
for $\delta_0$ small enough,
%
and
$\bfPs \bfPsh + {\bfn} \otimes \bfn_h$ is nonsymmetric with singular 
values $\{1,1, \bfn \cdot \bfn_h\}$, which are also strictly greater 
than zero. We will need the following estimates 
for $\bfB$:
\begin{equation}\label{Bestimates}
\| \bfB \|_{L^\infty(\Sigma_h)} \lesssim 1, \quad 
\| \bfB^{-1} \|_{L^\infty(\Sigma_h)} \lesssim 1, \quad
\| \bfI - \bfB \bfB^T \|_{L^\infty(\Sigma_h)} \lesssim  h^2
\end{equation}
The first estimate follows directly from the definition of $\bfB$, the second 
can be proved using the the fact that eigenvalues and singular values discussed 
above are all strictly greater than zero. To prove the third we proceed as follows
\begin{align}\nonumber
&\| \bfI - \bfB\bfB^T \|_{L^\infty(\Sigma_h)} 
\\
\qquad&= 
\| \bfI - (\bfI - \rho \bfkappa) ( \bfPs \bfPsh + \bfn \otimes \bfn_h )
\\ \nonumber
&\qquad \qquad \times ( \bfPsh \bfPs + \bfn_h \otimes \bfn ) (\bfI - \rho \bfkappa) \|_{L^\infty(\Sigma_h)} 
 \\
\qquad &= \| \bfI - \bfPs \bfPsh \bfPs - \bfn \otimes \bfn \|_{L^\infty(\Sigma_h)} + O(h^2)
\end{align}
where we collected the terms involving the distance function $\rho$ in 
the last term and used the assumption (\ref{eq:rhobounds}) that 
$\| \rho \|_{L^\infty(\Sigma_h)} \lesssim h^2$. Next for the remaining 
term we write $\bfI = \bfPs + \bfn \otimes \bfn$ and then we 
note that the identity 
\begin{equation}
\bfPs - \bfPs \bfPsh \bfPs = \bfPs (\bfPs - \bfPsh )(\bfPs - \bfPsh ) \bfPs 
\end{equation}
holds. Using the bound $ \| \bfPs - \bfPsh\|_{L^\infty(\Sigma_h)} 
\lesssim \| \bfn - \bfn_h\|_{L^\infty(\Sigma_h)} \lesssim h$ the 
estimate follows.

\subsection{Change of Domain of Integration}

The surface measure $\ds$ on $\Sigma$ is related to the surface measure $\dsh$ 
on $\Sigma_h$ by the identity  
\begin{equation}
d \sigma = |\bfB | d \sigma_h
\end{equation}
where $|\bfB|$ is the determinant of $\bfB$ which is given by
\begin{equation}
|\bfB| = \Big( \prod_{i=1}^2 (1 - \rho \kappa_i^e (1 + \rho \kappa_i^e)^{-1})\Big) \bfn \cdot \bfn_h
\end{equation}
Using this 
identity we obtain the estimate
\begin{align}
&\|1 - |\bfB| \|_{L^\infty(\Sigma_h)} 
= 
\|1 - \Big( \Pi_{i=1}^2  (1 - \rho \kappa_i^e (1 + \rho \kappa_i)^{-1})\Big) {\bfn} \cdot \bfn_h \|_{L^\infty(\Sigma_h)} 
\\
&\qquad \leq 
\|1 - \bfn \cdot \bfn_h\|_{L^\infty(\Sigma_h)}  + O( \| \rho \|_{L^\infty(\Sigma_h)} )
\lesssim h^2
\end{align}
where we finally used the bound  $2(1 - \bfn \cdot \bfn_h) = | \bfn - \bfn_h|^2 
\lesssim h^2$. In summary we have the following estimates for the determinant
\begin{equation}\label{detBbounds}
\|\, |\bfB|\,\|_{L^\infty(\Sigma_h)} \lesssim 1,
\quad
\|\, |\bfB|^{-1} \,\|_{L^\infty(\Sigma_h)} \lesssim 1,
\quad
 \|1 - |\bfB| \|_{L^\infty(\Sigma_h)} \lesssim h^2
\end{equation}

\section{Estimate of the Condition Number}


\subsection{Discrete Poincar\'e Estimates}

In this section we derive several discrete Poincar\'e estimates. 
We begin with the standard Poincar\'e inequality on $\Sigma_h$ 
for functions in $H^1(\Sigma_h)$ with average zero and a constant 
uniform in $h$ for $h$ small enough. 

Then we show estimates that essentially quantifies the improved 
control of the solution and its gradient provided by the gradient 
jump stabilization term. In order to prepare for the proof of our 
main estimates Lemma \ref{lemmaCondB} and Lemma \ref{lemmaCondBB} 
we first prove a Poincar\'e inequality for piecewise constant 
functions defined on $\mcK_h$ in Lemma \ref{lemmaCondA} and then 
in Lemma \ref{lemmaCondAA} we quantify the improved control of 
the total gradient provided by the stabilization term. 

The proof of Lemma \ref{lemmaCondA} builds on the idea of using 
a covering of $\Omega_h$ in terms of sets consisting of 
a uniformly bounded number of elements. On these sets a local 
Poincar\'e estimate holds for functions with local average zero. 
The local averages can then be approximated by a smooth function 
for which a standard Poincar\'e estimate on the exact surface can 
finally be applied. This approach is used in \cite{LeMu11} to 
prove Korn's inequality in a tubular neighborhood of a smooth 
surface. In contrast to \cite{LeMu11} our proof handles discrete 
functions and the fact that $\Omega_h$ is a polygon that changes 
with the mesh size $h$. 

\begin{lem} \label{lemmapoincaresigmah} Let 
$\lambda_h:L^2(\Sigma_h) \rightarrow \IR$ be the average 
$\lambda_h(v) = |\Sigma_h|^{-1}\int_{\Sigma_h} v d \sigma$. 
Then the following estimate holds
\begin{equation}
\| v - \lambda_h(v) \|_{\Sigma_h} 
\leq 
\| \nabla_{\Sigma_h} v \|_{\Sigma_h} 
\quad \forall v \in H^1(\Sigma_h)
\end{equation}
for $0<h\leq h_0$ with $h_0$ small enough.
\end{lem}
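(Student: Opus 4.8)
The plan is to transfer the classical Poincaré inequality on the smooth surface $\Sigma$ to the discrete surface $\Sigma_h$ using the lift machinery set up in Section 3, and to track the perturbation constants carefully so that the final constant can be taken to be exactly $1$ for $h$ small enough. First I would recall that on the smooth closed surface $\Sigma$ the Poincaré inequality
\begin{equation}
\| w - \lambda(w) \|_{\Sigma} \leq C_P \| \nabla_\Sigma w \|_{\Sigma}
\quad \forall w \in H^1(\Sigma),
\end{equation}
holds with $\lambda(w) = |\Sigma|^{-1}\int_\Sigma w\,\ds$ and a fixed constant $C_P$ depending only on $\Sigma$; this is standard for a smooth compact manifold (it follows from compactness of the embedding $H^1(\Sigma)\hookrightarrow L^2(\Sigma)$, or from elliptic regularity for the Laplace–Beltrami operator as already invoked in Section 2).

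Next, given $v \in H^1(\Sigma_h)$ I would set $w = v^l \in H^1(\Sigma)$, the lift defined by $v^l \circ G_K = v$, and compare the two sides of the inequality under the change of variables $\ds = |\bfB|\,\dsh$. Using the determinant bounds \eqref{detBbounds}, namely $\| \,|\bfB|\, \|_{L^\infty(\Sigma_h)} \lesssim 1$, $\| \,|\bfB|^{-1}\, \|_{L^\infty(\Sigma_h)} \lesssim 1$, and $\| 1 - |\bfB| \|_{L^\infty(\Sigma_h)} \lesssim h^2$, one gets $\| v \|_{\Sigma_h} = (1 + O(h^2)) \| v^l \|_\Sigma$ and similarly $|\Sigma_h| = (1+O(h^2))|\Sigma|$ and $\lambda_h(v) = \lambda(v^l) + O(h^2)\|v^l\|_\Sigma$, hence $\| v - \lambda_h(v) \|_{\Sigma_h} \leq (1 + Ch^2)\| v^l - \lambda(v^l) \|_\Sigma$. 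For the gradient side I would use the identity $\nablas v^l = \bfB^{-T}\nabla_{\Sigma_h} v$ from Section 3.2 together with $\| \bfB^{-1}\|_{L^\infty(\Sigma_h)}\lesssim 1$ and again the determinant bound, which gives $\| \nabla_{\Sigma_h} v \|_{\Sigma_h} \geq (1 - Ch^2)\| \nablas v^l \|_\Sigma$. More precisely the $\bfI - \bfB\bfB^T = O(h^2)$ estimate in \eqref{Bestimates} says that $\bfB$ is an $O(h^2)$-perturbation of an orthogonal factor on the relevant subspace, so the gradient norms agree up to a factor $1 + O(h^2)$; I would phrase this via $\| \nablas v^l\|_\Sigma^2 = \int_{\Sigma_h} (\nabla_{\Sigma_h}v)^T \bfB^{-1}\bfB^{-T}(\nabla_{\Sigma_h}v)\,|\bfB|\,\dsh$ and bound the matrix $\bfB^{-1}\bfB^{-T}|\bfB|$ against $\bfI$ up to $O(h^2)$ in the tangential directions.

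Combining the three comparisons and the smooth Poincaré inequality yields
\begin{equation}
\| v - \lambda_h(v)\|_{\Sigma_h} \leq (1 + Ch^2)\,\| v^l - \lambda(v^l)\|_\Sigma
\leq (1+Ch^2)\,C_P\,\| \nablas v^l \|_\Sigma
\leq (1+Ch^2)\,C_P\,\| \nabla_{\Sigma_h} v \|_{\Sigma_h},
\end{equation}
with $C$ depending only on $\Sigma$. The stated inequality with constant $1$ then requires one more step: absorb the constant $C_P$. This is the only genuinely delicate point, and I expect it to be the main obstacle — a naive lift only gives constant $C_P(1+Ch^2)$, not $1$. The fix is that the inequality as stated need not have a sharp constant of the form "$1$ literally"; rather, one should note that for the purposes of the condition-number analysis any $h$-uniform constant suffices, and if the literal "$1$" is wanted one rescales: the inequality $\|v-\lambda_h(v)\|_{\Sigma_h}\le \tn v\tn_{\Sigma_h}$ with constant $1$ is exactly the Poincaré–Wirtinger inequality on $\Sigma_h$ whose optimal constant is $(\mu_1(\Sigma_h))^{-1/2}$, the reciprocal square root of the first nonzero Neumann/closed eigenvalue of the Laplace–Beltrami operator on the polyhedral surface $\Sigma_h$; one then shows $\mu_1(\Sigma_h) \to \mu_1(\Sigma) > 0$ as $h\to 0$ by the same lift/perturbation argument (spectral convergence under the $O(h^2)$ metric perturbation), and since the true surface satisfies $\mu_1(\Sigma)\ge 1$ is \emph{not} automatic, the cleanest route is simply to restate the lemma with a generic $h$-independent constant $C$, or to observe that only $\| v - \lambda_h(v)\|_{\Sigma_h} \lesssim \| \nabla_{\Sigma_h} v\|_{\Sigma_h}$ is used downstream. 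In the write-up I would carry out the lift-and-perturb computation and conclude with an $h$-uniform constant; tightening it to $1$ is a matter of the normalization of the surface and is not essential to the subsequent arguments.
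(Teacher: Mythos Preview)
Your approach is essentially the paper's: lift $v$ to $v^l$ on $\Sigma$, invoke the classical Poincar\'e inequality there, and transfer the $L^2$ and gradient norms back to $\Sigma_h$ via the $\bfB$-estimates. Two simplifications worth noting: the paper bypasses your estimate of $\lambda_h(v)-\lambda(v^l)$ by using that $\alpha=\lambda_h(v)$ minimizes $\|v-\alpha\|_{\Sigma_h}$ over constants, so $\|v-\lambda_h(v)\|_{\Sigma_h}\le\|v-\lambda(v^l)\|_{\Sigma_h}$ for free; and your extended worry about achieving the literal constant $1$ is moot, since the paper's own proof only establishes the inequality with $\lesssim$ (the ``$\le$'' in the statement should be read as $\lesssim$, consistent with how the lemma is used downstream).
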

\begin{proof} Let $\lambda(v) = |\Sigma|^{-1}\int_{\Sigma} v d\sigma$ 
be the average of $v\in L^2(\Sigma)$. Using the fact that 
$\alpha = \lambda_h(v)$ is the constant that minimizes 
$\| v - \alpha \|_{\Sigma_h}$ and then changing coordinates 
to $\Sigma$ followed by the standard Poincar\'e estimate 
on $\Sigma$ we obtain 
\begin{align}
\| v - \lambda_h(v) \|_{\Sigma_h} &\lesssim \|v - \lambda(v^l) \|_{\Sigma_h}
\\
&\lesssim \|v^l -\lambda(v^l)\|_\Sigma 
\\
&\lesssim \| \nabla_\Sigma v^l \|_{\Sigma}
\\
&\lesssim \| \nabla_{\Sigma_h} v \|_{\Sigma_h}
\end{align}
where we mapped back to $\Sigma_h$ in the last step. This concludes 
the proof.
\end{proof}

\begin{lem} \label{lemmaCondA} Let $v$ be a piecewise constant function 
on $\mcK_h$ and let 
$\lambda_h(v)=|\Omega_h|^{-1}\int_{\Omega_h} v dx$ be the average on $\Omega_h$. Then the following estimate holds
\begin{equation}
\| v - \lambda_h(v) \|^2_{\Omega_h} \lesssim h^{-1} \sum_{F \in \mcF_h} \|[v]\|^2_F
\end{equation}
for $0<h\leq h_0$ with $h_0$ small enough.
\end{lem}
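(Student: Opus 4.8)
The plan is to establish the discrete Poincaré inequality for piecewise constants via a covering argument, exactly as the authors signal in the paragraph preceding the lemma. First I would construct a covering of $\Omega_h$ by overlapping "patches" $\{\omega_i\}$, each consisting of a uniformly bounded number of elements of $\mcK_h$, with the property that (i) each element lies in at least one patch, (ii) the overlap multiplicity is bounded uniformly in $h$, (iii) each patch is connected and has diameter $\sim h$, and (iv) consecutive patches overlap in at least one full element so that the union is "chained" together across $\Omega_h$. Such a covering exists because $\mcK_h$ is quasi-uniform and shape regular and $\Omega_h$ is the union of the (uniformly bounded number of) elements meeting the connected surface $\Sigma_h$; concretely one can take $\omega_i$ to be the patch of all elements sharing a vertex with a given element $K_i$.

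Next I would prove the \emph{local} estimate on a single patch: for $v$ piecewise constant on $\mcK_h$ and $\lambda_{\omega_i}(v)$ its average over $\omega_i$,
\begin{equation}
\| v - \lambda_{\omega_i}(v) \|^2_{\omega_i} \lesssim h^{-1} \sum_{F \in \mcF_h, F \subset \omega_i} \|[v]\|^2_F .
\end{equation}
Since a piecewise constant on a patch of boundedly many elements is determined up to a constant by its jumps across the internal faces, and all quantities scale by powers of $h$ (an element has volume $\sim h^d$, a face has area $\sim h^{d-1}$), this is a finite-dimensional norm-equivalence on a reference configuration combined with a scaling argument; the constant depends only on the shape-regularity parameter and the maximal patch size. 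I would then patch these together: writing $\bar v$ for the piecewise constant function on the patches given by $\bar v|_{\omega_i} = \lambda_{\omega_i}(v)$ (after a partition-of-unity-type selection to make it well defined on overlaps), the chaining property lets me control $\|\bar v - \lambda_h(v)\|_{\Omega_h}$ by the differences $|\lambda_{\omega_i}(v) - \lambda_{\omega_j}(v)|$ over adjacent patches, each of which is again bounded by the local jump sum on $\omega_i \cup \omega_j$ via the local estimate. Summing over the covering and using the bounded overlap gives $\|v - \lambda_h(v)\|^2_{\Omega_h} \lesssim h^{-1}\sum_{F\in\mcF_h}\|[v]\|_F^2$ plus the error from replacing the global average by a locally averaged function, which is handled by a standard Poincaré estimate.

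For that last point — comparing the discrete local averages to a genuine global average — I would follow the strategy quoted from \cite{LeMu11}: interpolate the patchwise constants $\lambda_{\omega_i}(v)$ by a smooth (or piecewise linear) function $w$ on $\Sigma$ with $\|\nabla_\Sigma w\|_\Sigma^2 \lesssim h^{-1}\sum_F\|[v]\|_F^2$ (the gradient of the interpolant across adjacent patches is exactly the patch-average differences divided by $h$), apply the continuous Poincaré inequality on $\Sigma$ (available since $\Sigma$ is fixed and smooth) to control $\|w - \lambda(w)\|_\Sigma$, and then transfer back to $\Omega_h$ using that $\Omega_h \subset U_{\delta_0}(\Sigma)$ is a tubular neighborhood of thickness $\sim h$ (so integrating over the tube costs one factor of $h$, matching the $h^{-1}$). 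The main obstacle I anticipate is precisely this transfer step: one must carefully handle that $\Omega_h$ is a polygonal tube whose geometry changes with $h$, ensuring the constant in the covering, in the interpolation onto $\Sigma$, and in the tube-to-surface comparison are all uniform in $h$; this is where the quasi-uniformity, shape regularity, and the closest-point-projection estimates \eqref{eq:rhobounds} must be invoked in concert, and it is the part that goes beyond the classical fixed-domain argument of \cite{LeMu11}.
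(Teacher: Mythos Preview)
Your plan is essentially the paper's proof: cover $\Omega_h$ by patches $\omega_{h,\bfx}$ of uniformly bounded cardinality, apply a local Poincar\'e estimate on each patch, lift the patch averages to a function on $\Sigma$ (the paper does this by mollification, setting $\tilde a(\bfx)=\int_\Sigma \varphi_{h,\bfx}\,v^l$ with smooth cut-offs $\varphi_{h,\bfx}$ rather than by piecewise-linear interpolation), and then invoke the continuous Poincar\'e inequality on the fixed surface $\Sigma$ before transferring back through the tube of thickness $\sim h$. One correction to your bookkeeping: the local patch estimate scales as $\|v-\lambda_{\omega_i}(v)\|_{\omega_i}^2\lesssim h\sum_{F\subset\omega_i}\|[v]\|_F^2$ (not $h^{-1}$), and correspondingly $\|\nabla_\Sigma w\|_\Sigma^2\lesssim h^{-2}\sum_F\|[v]\|_F^2$; the factor $h^{-1}$ in the final bound arises only after multiplying by the tube thickness $h$ in the global Poincar\'e step, so when you carry out the details make sure the powers of $h$ are tracked accordingly.
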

\begin{proof} 
Let $B_{\delta}(\bfx) = \{\bfy \in \IR^n: |\bfy-\bfx|_{\IR^d} < \delta\}$ 
and let $D_{h,\bfx} = B_{h}(\bfx) \cap \Sigma$ for $\bfx \in \Sigma$. 
Next we let
\begin{equation}
\mcK_{h,\bfx} = \{K \in \mcK_h, \overline{K}^l \cap D_{h,\bfx} \neq \emptyset\},
\qquad
\omega_{h,\bfx} = \cup_{K\in \mcK_{h,\bfx}} K
\end{equation} 
We note that there is a uniform bound 
$\text{card}(\mcK_{h,\bfx})\lesssim 1$ 
on the number of elements in $\mcK_{h,\bfx}$ since the mesh is 
quasiuniform. Furthermore, if $\mathcal{X}_h$ is a set of points on 
$\Sigma$ such that  $\{ D_{h,\bfx}: \bfx \in \mathcal{X}_h \}$ is 
a covering of $\Sigma$ then $\{\omega_{h,\bfx} : \bfx \in \mathcal{X}_h\}$ is a covering of $\Omega_h$. 

Next let $\chi:[0,1)\rightarrow \IR$ be smooth, nonnegative, 
have compact support, and be constant equal to $1$ in a neighborhood 
of $0$. Define $\chi_{h,x}(\bfz) = \chi(|\bfz-\bfx|_{\IR^d})/h)$ 
and 
\begin{equation}
\varphi_{h,\bfx}(\bfz) = \frac{\chi_{h,\bfx}(\bfz)}{\int_{\Sigma} 
\chi_{h,\bfx}(\bfz) d\bfz}
\end{equation}
Then $\text{supp}(\varphi_{h,\bfx})\cap \Sigma \subset D_{h,\bfx}$ 
and we have the estimates
\begin{equation}\label{eq:varphibounds}
\|\varphi_{h,\bfx}\|_{L^\infty(D_{h,\bfx})} \lesssim h^{1-d},
\qquad 
\|\nabla \varphi_{h,\bfx}\|_{L^\infty(D_{h,\bfx})} \lesssim h^{-d}
\end{equation}

On all of the sets $\omega_{h,\bfx}$ we have the local Poincar\'e 
estimate
\begin{equation}\label{eq:localpoincare}
\|a_{\bfx} - v\|^2_{\omega_{x,h}}
\lesssim h^2 \sum_{F \in \mcF_{h,\bfx}} h^{-1}\|[v]\|_F^2
\end{equation}
where $a_{\bfx} = |\omega_{h,\bfx}|^{-1}\int_{\omega_{h,\bfx}} v$ 
is the average of $v$ over $\omega_{h,\bfx}$ and $\mcF_{h,\bfx}$ is 
the set of interior faces in $\mcK_{h,\bfx}$. We finally 
let $\tilde{a} = \tilde{a}(v)$ be defined by 
\begin{equation}
\tilde{a}:\Sigma \ni \bfx \mapsto \int_{\Sigma} \varphi_{\bfx}(\bfz)v^l(\bfz) d\bfz \in \IR
\end{equation}
with average 
\begin{equation}
a = |\Sigma|^{-1}\int_\Sigma \tilde{a} d\sigma
\end{equation}
%

With these definitions we have the estimates
\begin{align}\label{eq:CondA:aaa}
\| v - \lambda_h(v) \|^2_{\Omega_h} &\leq \| v - a \|^2_{\Omega_h} 
\\ \label{eq:CondA:bbb}
&\lesssim 
\sum_{{\bfx} \in \mathcal{X}_h} \| v - a_{\bfx} \|^2_{\omega_{h,\bfx}}
+ \| a_{\bfx} - a \|^2_{\omega_{h,\bfx}}
\\  \label{eq:CondA:ccc}
&\lesssim \sum_{\bfx \in \mathcal{X}_h} h^2 
\sum_{F \in \mcF_I(\omega_{h,\bfx})} h^{-1}\|[v]\|_F^2 
+ \sum_{\bfx \in \mathcal{X}_h} h\| a_{\bfx} - a \|^2_{D_{h,\bfx}}
\end{align}
where we used the fact that $\alpha = \lambda_h(v)$ is the 
constant that minimizes 
$\| v - \alpha \|^2_{\Omega_h}$ in (\ref{eq:CondA:aaa}) and 
the local Poincar\'e estimate (\ref{eq:localpoincare}) to 
estimate the first term and the estimate $|\omega_h| \lesssim h |D_{x,h}|$ 
together with the fact that $a_{\bfx} - a$ is a constant to estimate 
the second term in (\ref{eq:CondA:bbb}). Next we split the second 
term in (\ref{eq:CondA:ccc}) by adding and subtracting the constant 
$\tilde{a}(\bfx)$ and the function $\tilde{a}$ in each term 
in the sum as follows
\begin{align}
\sum_{\bfx \in \mathcal{X}_h} h\| a_{\bfx} - a \|^2_{D_{h,\bfx}} 
&\lesssim 
\sum_{\bfx \in \mathcal{X}_h} h\| a_{\bfx} - \tilde{a}(\bfx)\|^2_{D_{h,\bfx}}
\\ \nonumber
&\qquad \sum_{\bfx \in \mathcal{X}_h}
+h \| \tilde{a}(\bfx) - \tilde{a}\|^2_{D_{h,\bfx}}
+
\sum_{\bfx \in \mathcal{X}_h}
h \|\tilde{a} - a \|^2_{D_{h,\bfx}}
\\
&=I + II + III 
\end{align}
We proceed with estimates of Terms $I-III$.

\paragraph{Term $\bfI$} Using the fact that $a_{\bfx}-\tilde{a}(\bfx)$ 
is constant on $\omega_{h,\bfx}$, the bound 
$|\omega_{h,\bfx}| \lesssim h^n$, the definition of $\tilde{a}$, 
and the identity $a_{\bfx} = a_{\bfx} \int_{\Sigma} \varphi_x(\bfz) d\bfz 
= \int_{D_{h,\bfx}} a_{\bfx} \varphi_{\bfx}(\bfz) d\bfz$, we obtain
\begin{align}
\| a_{\bfx} - \tilde{a}(\bfx) \|^2_{\omega_{h,\bfx}} 
&\lesssim h^d |a_{\bfx} - \tilde{a}(\bfx)|^2
\\
&\lesssim h^d \left|a_{\bfx} - \int_{\Sigma_h} \varphi_{\bfx}(\bfz)v^l(\bfz)d\bfz \right|^2
\\
&\lesssim h^d \left|\int_{\Sigma_h} \varphi_{\bfx}(\bfz)(a_{\bfx} - v^l(\bfz))d\bfz \right|^2
\\ \label{lemma:a:I:a}
&\lesssim h^d \|\varphi_{\bfx}\|^2_{D_{h,\bfx}} 
\|a_{\bfx} - v^l\|^2_{D_{h,\bfx}}
\\ \label{lemma:a:I:b}
&\lesssim h \|a_{\bfx} - v^l\|^2_{D_{h,\bfx}}
\\ \label{lemma:a:I:c}
&\lesssim \|a_{\bfx} - v\|^2_{\omega_{h,\bfx}}
\\ \label{lemma:a:I:d}
&\lesssim h^2 \sum_{F \in \mcF_{h,\bfx}} h^{-1}\|[v]\|_F^2  
\end{align}
where we used Cauchy-Schwarz in (\ref{lemma:a:I:a}), the bounds (\ref{eq:varphibounds}) for $\varphi_{\bfx}$ in (\ref{lemma:a:I:b}), 
an element wise inverse inequality in (\ref{lemma:a:I:c}), and finally 
the local Poincar\'e estimate (\ref{eq:localpoincare}) in (\ref{lemma:a:I:d}). Thus we have the estimate
\begin{equation}
I = \sum_{\bfx \in \mathcal{X}_h} h\| a_{\bfx} - \tilde{a}(\bfx)\|^2_{D_{h,\bfx}}
\lesssim \sum_{\bfx \in \mathcal{X}_h} h^2 \sum_{F \in \mcF_{h,\bfx}} h^{-1}\|[v]\|_F^2
\end{equation}

\paragraph{Term $\bfI\bfI$} Using the estimate 
$|D_{h,\bfx}|\lesssim h^{d-1}$ followed by the fundamental 
theorem of calculus we obtain
\begin{align}
h \| \tilde{a}(\bfx) - \tilde{a} \|^2_{D_{h,\bfx}} 
&\lesssim h^d \| \tilde{a}(\bfx) - \tilde{a} \|^2_{L^\infty(D_{h,\bfx})} 
\\
&\lesssim h^{d+2} \| \nabla_\Sigma \tilde{a} \|^2_{L^\infty(D_{h,\bfx})}
\end{align}
For each $\bfy \in D_{h,\bfx}$ we have
\begin{align}
&\nabla_{\Sigma,\bfy} \tilde{a}(\bfy)
=
\nabla_{\Sigma,\bfy} \int_\Sigma \varphi_{\bfy}(\bfz)v^l(\bfz) d\bfz 
= 
\int_{\Sigma} (\nabla_{\Sigma,\bfy} \varphi_{\bfy})(\bfz)v^l(\bfz) d\bfz 
\\ 
&\qquad= 
\int_\Sigma (\nabla_{\Sigma,\bfy} \varphi_{\bfy})(\bfz)(v^l(\bfz) - a_{\bfy}) d\bfz
\leq
\|\nabla_{\Sigma,{\bfy}} \varphi_{\bfy} \|_{D_{h,\bfy}}\|v^l - a_{\bfy}\|_{D_{h,\bfy}}
\end{align}
and thus we find that 
\begin{align}
|\nabla_\Sigma \tilde{a}(\bfy)|_{\IR^d}^2 &\lesssim h^{-(d+1)} 
\|v^l - a_{\bfy}\|^2_{D_{h,\bfy}}
\\
&\lesssim 
h^{-(d+2)} \|v - a_{\bfy}\|^2_{\omega_{h,\bfy}}
\\
&\lesssim 
h^{-d} \sum_{F \in \mcF_{h,\bfy}} h^{-1}\|[v]\|_F^2
\end{align}
where we used the local Poincar\'e estimate (\ref{eq:localpoincare}). 
Taking the supremum over $D_{h,\bfx}$ we obtain
\begin{equation}\label{eq:maxatilde}
\| \nabla_\Sigma \tilde{a}\|^2_{L^\infty(D_{h,\bfx})}
\lesssim 
h^{-d} \sum_{F \in 2\mcF_{h,\bfx}} h^{-1}\|[v]\|_F^2
\end{equation}
where $2\mcF_{h,\bfx}=\cup_{\bfy\in D_{h,\bfx}} \mcF_{h,\bfy}$ is 
the set of interior faces contained in the set 
$2\omega_{h,\bfx} = \cup_{\bfy \in D_{h,\bfx}} \omega_{h,\bfy}$. 
Thus we conclude that 
\begin{equation}
II=\sum_{\bfx \in \mathcal{X}_h} h \| \tilde{a}(\bfx) - \tilde{a} \|^2_{D_{h,\bfx}} 
\lesssim 
\sum_{\bfx \in \mathcal{X}_h} h^2 \sum_{F \in \mcF_{h,\bfx}} 
h^{-1}\|[v]\|_F^2
\end{equation}
\paragraph{Term $\bfI\bfI\bfI$} Using the standard Poincar\'e estimate 
on $\Sigma$ we obtain
\begin{align}
III&= \sum_{\bfx \in \mathcal{X}_h} h \| \tilde{a} - a \|_{D_{h,\bfx}}^2 
\lesssim h \|\tilde{a} - a \|_{\Sigma}^2
\\ 
&\qquad
\lesssim h \|\nablas \tilde{a}\|^2_\Sigma 
\lesssim \sum_{\bfx \in \mathcal{X}_h} 
h \| \nablas \tilde{a} \|_{D_{h,\bfx}}^2
\\
&\qquad 
\lesssim 
h^{d} \sum_{\bfx \in \mathcal{X}_h} \| \nablas \tilde{a} \|_{L^\infty(D_{h,\bfx}}^2
\lesssim 
 \sum_{\bfx \in \mathcal{X}_h} \sum_{F \in 2\mcF_{h,x}} h^{-1}\|[v]\|_F^2
\end{align}
where we used (\ref{eq:maxatilde}). 

Starting from the bound (\ref{eq:CondA:ccc}) and using the 
bounds of Terms $I,II,$ and $III$ for the second term 
we arrive at 
\begin{equation}
\|  v - \lambda_h(v) \|^2_{\Omega_h} \lesssim 
\sum_{\bfx \in \mathcal{X}_h} \sum_{F \in 2\mcF_{h,x}} h^{-1}\|[v]\|_F^2
\lesssim  \sum_{F \in \mcF_{h}} h^{-1}\|[v]\|_F^2
\end{equation}
where, in the last inequality, we used the fact that there is 
a covering such that we have a uniform bound on the number of sets 
$\mcF_{h,\bfx}, \bfx \in \mathcal{X}_h$ that each edge belongs to. 
To construct such a covering we let 
$\mathcal{Y}_{h/2} = \{\bfy \in ((h/2)\mathbb{Z})^d : |\rho(\bfy)|<h/2\}$ 
and $\mathcal{X}_{h} = \{\bfx = \bfp(\bfy): \bfy \in \mathcal{Y}_{h/2}\}.$
Then $\Sigma \subset \cup_{\bfy \in \mathcal{Y}_{h/2}} B_{h/2}(\bfy)$ 
and we note that $\Sigma \cap B_{h/2}(\bfy) \subset \Sigma \cap B_h(\bfp(\bfy))= D_{h,\bfp(\bfy)}$. Thus $\{D_{h,\bfx} : \bfx \in \mathcal{X}_h\}$ is a covering of $\Sigma$. Next we note that there 
is a constant $C$ such that $(2\omega_{h,\bfx}^l)\cap \Sigma 
\subset D_{Ch,\bfx}$ since the mesh is quasiuniform. The covering 
number of $\{D_{C h,\bfx} : \bfx \in \mathcal{X}_h\}$ is uniformly 
bounded by the number of points in the set $\{\bfy \in ((h/2)\mathbb{Z})^d : 
|\bfy - \bfx|< Ch \}$, which can be estimated by $(2(2C+1))^d$.
\end{proof}

\begin{lem}\label{lemmaCondAA} The following estimate holds
\begin{equation}
h^2 \|\nabla v \|^2_{\Sigma_h} \lesssim h^2 \|\nabla_{\Sigma_h} v \|^2_{\Sigma_h} + \tn v \tn^2_{\mcF_h} \lesssim \tn v \tn_h^2 \quad \forall v \in \mcV_h
\end{equation}
for $0<h\leq h_0$ with $h_0$ small enough.
\end{lem}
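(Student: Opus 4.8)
The plan is as follows. The second inequality is immediate: since $h\le h_0\lesssim 1$ we have $h^2\|\nabla_{\Sigma_h} v\|_{\Sigma_h}^2\le\tn v\tn_{\Sigma_h}^2$, so adding $\tn v\tn_{\mcF_h}^2$ to both sides gives $h^2\|\nabla_{\Sigma_h} v\|_{\Sigma_h}^2+\tn v\tn_{\mcF_h}^2\lesssim\tn v\tn_h^2$. For the first inequality, note that on each $K\in\mcK_h$ both $\nabla v$ and $\bfn_h$ are constant on $K\cap\Sigma_h$, so the orthogonal splitting $\nabla v=\bfPsh\nabla v+(\bfn_h\cdot\nabla v)\bfn_h=\nabla_{\Sigma_h} v+(\bfn_h\cdot\nabla v)\bfn_h$ yields $\|\nabla v\|_{\Sigma_h}^2=\|\nabla_{\Sigma_h} v\|_{\Sigma_h}^2+\|\bfn_h\cdot\nabla v\|_{\Sigma_h}^2$. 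Hence it suffices to bound $h^2\|\bfn_h\cdot\nabla v\|_{\Sigma_h}^2$ by the right-hand side, and then add $h^2\|\nabla_{\Sigma_h} v\|_{\Sigma_h}^2$ to both sides.

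First I would record two auxiliary facts. (i) For any field $w$ that is piecewise constant on $\mcK_h$ one has $\|w\|_{\Sigma_h}^2\lesssim h^{-1}\|w\|_{\Omega_h}^2$, since on each $K$ the value $|w|$ is constant and $|\Sigma_h\cap K|\lesssim h^{d-1}\lesssim h^{-1}|K|$ for a shape regular quasiuniform mesh. (ii) Because $v$ is continuous and piecewise linear, the tangential component of $\nabla v$ is continuous across every $F\in\mcF_h$, so $[\nabla v]_F=[\bfn_F\cdot\nabla v]_F\,\bfn_F$ and therefore $\sum_{F\in\mcF_h}\|[\nabla v]\|_F^2=\tau_0^{-1}\tn v\tn_{\mcF_h}^2$. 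Applying Lemma \ref{lemmaCondA} to each scalar component of the piecewise constant vector field $\nabla v$, with $\bfc=|\Omega_h|^{-1}\int_{\Omega_h}\nabla v\,dx$ its average over $\Omega_h$, gives $\|\nabla v-\bfc\|_{\Omega_h}^2\lesssim h^{-1}\sum_{F\in\mcF_h}\|[\nabla v]\|_F^2\lesssim h^{-1}\tn v\tn_{\mcF_h}^2$, and then (i) yields $\|\nabla v-\bfc\|_{\Sigma_h}^2\lesssim h^{-2}\tn v\tn_{\mcF_h}^2$.

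Next I split $\bfn_h\cdot\nabla v=\bfn_h\cdot(\nabla v-\bfc)+\bfn_h\cdot\bfc$. The first term satisfies $\|\bfn_h\cdot(\nabla v-\bfc)\|_{\Sigma_h}\le\|\nabla v-\bfc\|_{\Sigma_h}$, so $h^2$ times its square is $\lesssim\tn v\tn_{\mcF_h}^2$. For the constant term the key geometric ingredient is that the symmetric matrix $\int_{\Sigma_h}\bfPsh\,d\sigma_h$ is positive definite with smallest eigenvalue bounded below uniformly in $h$: indeed $\int_\Sigma\bfPs\,d\sigma=\int_\Sigma(\bfI-\bfn\otimes\bfn)\,d\sigma$ is positive definite, since it is positive semidefinite and a vector in its kernel would force $\bfn$ to be parallel to it a.e. on $\Sigma$, which is impossible for a closed surface; a change of variables to $\Sigma$ together with the bounds $\|\bfn^e-\bfn_h\|_{L^\infty(\Sigma_h)}\lesssim h$ from \eqref{eq:rhobounds} and $\|\,|\bfB|-1\|_{L^\infty(\Sigma_h)}\lesssim h^2$ from \eqref{detBbounds} then shows $\int_{\Sigma_h}\bfPsh\,d\sigma_h=\int_\Sigma\bfPs\,d\sigma+O(h)$, so its smallest eigenvalue stays bounded below for $h$ small. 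Consequently, using $|\Sigma_h|\lesssim 1$ and $\bfPsh^T\bfPsh=\bfPsh$, for the constant vector $\bfc$ we obtain $\|\bfPsh\bfc\|_{\Sigma_h}^2=\bfc^T\big(\int_{\Sigma_h}\bfPsh\,d\sigma_h\big)\bfc\gtrsim|\bfc|^2\gtrsim\|\bfc\|_{\Sigma_h}^2$, whence $\|\bfn_h\cdot\bfc\|_{\Sigma_h}^2=\|\bfc\|_{\Sigma_h}^2-\|\bfPsh\bfc\|_{\Sigma_h}^2\lesssim\|\bfPsh\bfc\|_{\Sigma_h}^2$.

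Finally, $\|\bfPsh\bfc\|_{\Sigma_h}\le\|\bfPsh\nabla v\|_{\Sigma_h}+\|\bfPsh(\nabla v-\bfc)\|_{\Sigma_h}\le\|\nabla_{\Sigma_h} v\|_{\Sigma_h}+\|\nabla v-\bfc\|_{\Sigma_h}$, so $h^2\|\bfn_h\cdot\bfc\|_{\Sigma_h}^2\lesssim h^2\|\nabla_{\Sigma_h} v\|_{\Sigma_h}^2+h^2\|\nabla v-\bfc\|_{\Sigma_h}^2\lesssim h^2\|\nabla_{\Sigma_h} v\|_{\Sigma_h}^2+\tn v\tn_{\mcF_h}^2$. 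Combining the two pieces gives $h^2\|\bfn_h\cdot\nabla v\|_{\Sigma_h}^2\lesssim h^2\|\nabla_{\Sigma_h} v\|_{\Sigma_h}^2+\tn v\tn_{\mcF_h}^2$, and adding $h^2\|\nabla_{\Sigma_h} v\|_{\Sigma_h}^2$ to both sides yields the claimed first inequality. The step I expect to require the most care is the uniform-in-$h$ lower bound on $\int_{\Sigma_h}\bfPsh\,d\sigma_h$, i.e. making quantitative and perturbation stable the fact that the normals of a closed surface are not confined to a single line; the remaining ingredients are the pointwise orthogonal decomposition, the elementary trace inequality for piecewise constants, and a direct appeal to Lemma \ref{lemmaCondA}.
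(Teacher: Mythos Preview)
Your proof is correct and follows essentially the same approach as the paper: subtract the $\Omega_h$-average of $\nabla v$, control the deviation via Lemma~\ref{lemmaCondA} together with the piecewise-constant trace bound $\|\cdot\|_{\Sigma_h}^2\lesssim h^{-1}\|\cdot\|_{\Omega_h}^2$, and handle the constant vector using that a closed surface forces $\|\bfc\|_{\Sigma_h}\lesssim\|\bfPsh\bfc\|_{\Sigma_h}$. The only cosmetic difference is that you phrase the last step as uniform positive definiteness of $\int_{\Sigma_h}\bfPsh\,d\sigma_h$ via perturbation from $\int_\Sigma\bfPs\,d\sigma$, whereas the paper writes the equivalent chain $\|\bfa\|_{\Sigma_h}\lesssim\|\bfa\|_\Sigma\lesssim\|\bfPs\bfa\|_\Sigma\lesssim\|\bfPsh\bfa\|_{\Sigma_h}+h\|\bfa\|_{\Sigma_h}$ followed by a kick-back.
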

\begin{proof} We have
\begin{equation}\label{eq:CondAAa}
h^2 \| \nabla v \|_{\Sigma_h}^2 
\lesssim h^2 ( \| \nabla v - \bfa \|_{\Sigma_h}^2 
+ \| \bfa \|_{\Sigma_h}^2)
\lesssim h \| \nabla v - \bfa \|_{\Omega_h}^2 
+ h^2 \| \bfa \|_{\Sigma_h}^2
\end{equation}
for all $\bfa \in \IR^3$. Choosing $\bfa$ such that  
$(\nabla v - \bfa, \bfb)_{\Omega_h}=0$ $\forall \bfb\in \IR^d$, 
it follows from Lemma \ref{lemmaCondA} that 
\begin{equation}\label{eq:CondAAb}
h \| \nabla v - \bfa \|_{\Omega_h}^2 \lesssim \tn v \tn_{\mcF_h}^2
\end{equation}
Next, to estimate $h^2 \| \bfa \|^2_{\Sigma_h}$ we first map to 
$\Sigma$ and then use the fact that $\Sigma$ is a closed surface 
followed by finite dimensionality of the constant functions to 
conclude that
\begin{align}
\| \bfa \|_{\Sigma_h} 
&\lesssim \| \bfa \|_{\Sigma} 
\lesssim \| \bfPs \bfa \|_{\Sigma} 
 \lesssim \| \bfPs \bfa \|_{\Sigma_h}
\\ 
&\qquad
\lesssim \| \bfPsh \bfa \|_{\Sigma_h} 
+ \| (\bfPs - \bfPsh) \bfa \|_{\Sigma_h}
\lesssim \| \bfPsh \bfa \|_{\Sigma_h} 
+ h \| \bfa \|_{\Sigma_h}
\end{align}
where we mapped back to the discrete surface 
$\Sigma_h$ and used the estimate 
$\|\bfPs - \bfPsh \|_{L^\infty(\Sigma_h)} 
\lesssim h$. 
Finally, for $0<h\leq h_0$ with $h_0$ sufficiently small, 
a kick back argument leads to the estimate 
\begin{equation}\label{eq:CondAAc}
h^2 \| \bfa \|^2_{\Sigma_h} 
\lesssim h^2 \| \bfPsh \bfa \|^2_{\Sigma_h} 
\end{equation}
Next, writing 
$\bfPsh \bfa = \bfPsh  \nabla v - \bfPsh  (\nabla v - \bfa )
$, we have
\begin{align}
h^2\| \bfPsh \bfa \|^2_{\Sigma_h} &
\lesssim h^2 \| \bfPsh \nabla v\|^2_{\Sigma_h} 
+  h^2 \| \bfPsh (\nabla v - \bfa) \|^2_{\Sigma_h}
\\
&\lesssim h^2\| \nabla_{\Sigma_h} v\|^2_{\Sigma_h} 
+ h  \| \nabla v - \bfa \|^2_{\Omega_h}
\\ \label{eq:CondAAd}
&\lesssim h^2 \| \nabla_{\Sigma_h} v\|^2_{\Sigma_h} + \tn v \tn^2_{\mcF_h} 
\end{align}
Combining the estimates (\ref{eq:CondAAa}), (\ref{eq:CondAAb}),  (\ref{eq:CondAAc}), and (\ref{eq:CondAAd}), we obtain the desired result.
\end{proof}

We are now ready to state and prove our Poincar\'e inequality.

\begin{lem}\label{lemmaCondB} 
The following estimate holds
\begin{align}\label{poincare}
 \| v \|_{\Omega_h} &\lesssim h^{1/2} \tn v \tn_h \quad \forall v \in V_h
\end{align}
for $0<h\leq h_0$ with $h_0$ small enough.
\end{lem}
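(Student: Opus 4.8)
The goal is the global Poincaré-type estimate $\|v\|_{\Omega_h} \lesssim h^{1/2}\tnorm{v}_h$ for all $v \in \mcV_h$, i.e.\ controlling the full $L^2(\Omega_h)$ norm of the bulk function by its surface energy plus the stabilization. The natural strategy is to split $v$ into its mean value $\lambda_h(v)$ over $\Omega_h$ (or equivalently over $\Sigma_h$, up to lower-order terms) plus a zero-mean remainder, bound the remainder with Lemma~\ref{lemmaCondA} applied suitably, and bound the constant part using the zero-average constraint built into $\mcV_h$ together with the surface Poincaré inequality of Lemma~\ref{lemmapoincaresigmah}.

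\begin{proof}
Write $\lambda_h(v) = |\Omega_h|^{-1}\int_{\Omega_h} v\,dx$ for the bulk average. Since constants minimize the $L^2(\Omega_h)$ distance, and using $|\Omega_h| \lesssim h|\Sigma_h|$ (the bulk domain is a thin tube of width $\sim h$ around $\Sigma_h$, a consequence of quasiuniformity and property (a)), we get
\begin{equation}
\|v\|_{\Omega_h}^2 \lesssim \|v - \lambda_h(v)\|_{\Omega_h}^2 + |\Omega_h|\,|\lambda_h(v)|^2 \lesssim \|v - \lambda_h(v)\|_{\Omega_h}^2 + h\,|\Sigma_h|\,|\lambda_h(v)|^2.
\end{equation}
For the first term, note $v$ is piecewise linear but its full gradient $\nabla v$ is piecewise constant on $\mcK_h$; applying Lemma~\ref{lemmaCondA} to each component of $\nabla v$ and then using Lemma~\ref{lemmaCondAA} gives $h\|\nabla v - \bfa\|_{\Omega_h}^2 \lesssim \tnorm{v}_{\mcF_h}^2$ for the mean-value $\bfa$ of $\nabla v$. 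Combining this with a Poincaré inequality on $\Omega_h$ itself — which I would obtain by the same covering argument as in Lemma~\ref{lemmaCondA}, now working with local averages of $v$ rather than of a piecewise-constant function and invoking the continuous Poincaré estimate on $\Sigma$ after smoothing the local averages — yields $\|v - \lambda_h(v)\|_{\Omega_h}^2 \lesssim h^2\|\nabla v\|_{\Omega_h}^2 \lesssim h^2 \cdot h^{-1}(h^2\|\nabla_{\Sigma_h} v\|_{\Sigma_h}^2 + \tnorm{v}_{\mcF_h}^2)$ via Lemma~\ref{lemmaCondAA}, which is $\lesssim h\,\tnorm{v}_h^2$ as needed.

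\smallskip
For the constant part, the point is that $v$ has zero average on $\Sigma_h$, so $\lambda_h(v)$ (the bulk average) is close to zero. Precisely, $|\Sigma_h|\,\lambda_h(v)^2 \lesssim \lambda_h(v)\int_{\Sigma_h}\! 1 = \int_{\Sigma_h}(\lambda_h(v) - v) \le |\Sigma_h|^{1/2}\|v - \lambda_h(v)\|_{\Sigma_h}$, using $\int_{\Sigma_h} v = 0$; then an element-wise inverse and trace inequality gives $\|v - \lambda_h(v)\|_{\Sigma_h}^2 \lesssim h^{-1}\|v - \lambda_h(v)\|_{\Omega_h}^2$ (the surface cuts through shape-regular elements, so $L^2(\Sigma_h\cap K) \lesssim h^{-1/2}L^2(K)$ for linears), and we already bounded the right-hand side by $h\,\tnorm{v}_h^2$. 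Hence $|\Sigma_h|\,\lambda_h(v)^2 \lesssim \|v - \lambda_h(v)\|_{\Sigma_h}^2 \lesssim h^{-1}\|v-\lambda_h(v)\|_{\Omega_h}^2$, so $h\,|\Sigma_h|\,\lambda_h(v)^2 \lesssim \|v-\lambda_h(v)\|_{\Omega_h}^2 \lesssim h\,\tnorm{v}_h^2$. Adding the two contributions finishes the proof.
\end{proof}

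\medskip
\noindent\textbf{Main obstacle.} The delicate point is the bulk Poincaré step $\|v - \lambda_h(v)\|_{\Omega_h} \lesssim h\|\nabla v\|_{\Omega_h}$ with a constant uniform in $h$ despite $\Omega_h$ being an $h$-thin tubular polygon that changes with the mesh: one cannot cite a textbook Poincaré inequality because the Poincaré constant of such a thin varying domain is not obviously controlled. This is exactly the covering construction of Lemma~\ref{lemmaCondA}, and the honest proof should either reduce to that lemma directly (apply it componentwise to $\nabla v$, then integrate) or re-run the same covering/smoothing argument for $v$ itself; the bookkeeping with the sets $\omega_{h,\bfx}$, the cutoffs $\varphi_{h,\bfx}$, and the factors of $h$ is where all the work sits. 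A secondary subtlety worth stating carefully is the inverse/trace estimate $\|w\|_{\Sigma_h}^2 \lesssim h^{-1}\|w\|_{\Omega_h}^2$ for $w \in \mcV_h$, which relies on the surface cutting shape-regular elements and on a lower bound for how much of each cut element sits on one side — but for piecewise \emph{linear} $w$ this holds without any assumption on how the cut is located, which is precisely why the stabilization is needed elsewhere but not here.
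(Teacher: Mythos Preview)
There is a genuine gap: the bulk Poincar\'e step $\|v - \lambda_h(v)\|_{\Omega_h}^2 \lesssim h^2\|\nabla v\|_{\Omega_h}^2$ that you assert is false. The Poincar\'e constant of a thin tubular neighborhood of a closed surface is $O(1)$, not $O(h)$: take $\Sigma$ the unit circle, $\Omega_h$ an annulus of width $\sim h$, and $v$ a piecewise linear approximation of $\sin\theta$ extended constantly in the radial direction; then both $\|v - \lambda_h(v)\|_{\Omega_h}^2$ and $\|\nabla v\|_{\Omega_h}^2$ are $\sim h$, so their ratio is $\sim 1$. If you rerun the covering argument of Lemma~\ref{lemmaCondA} for a continuous piecewise linear $v$, Terms $I$ and $II$ indeed contribute $O(h^2)\|\nabla v\|_{\Omega_h}^2$, but Term $III$ --- the one that invokes the continuous Poincar\'e on $\Sigma$ --- only delivers $O(1)\|\nabla v\|_{\Omega_h}^2$, because the smoothed average $\tilde a$ varies along $\Sigma$ at the same rate as $v$ does. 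With the correct $O(1)$ constant your chain yields $\|v - \lambda_h(v)\|_{\Omega_h}^2 \lesssim \|\nabla v\|_{\Omega_h}^2 \lesssim h^{-1}\tnorm{v}_h^2$, a factor $h^{-2}$ off target. You flag exactly this step as the ``main obstacle,'' but the inequality you write down for it does not hold, and neither of the two routes you suggest (componentwise Lemma~\ref{lemmaCondA} on $\nabla v$ then integrate, or rerun the covering for $v$) recovers the missing $h^2$.

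The paper avoids this by never passing through $\|\nabla v\|_{\Omega_h}$ for the leading term. Instead it transfers from $\Omega_h$ to $\Sigma_h$ directly: in each patch $\omega_{h,\bfx}$ there is an element $K_{h,\bfx}$ with a \emph{large} intersection $|K_{h,\bfx}\cap\Sigma_h| \gtrsim h^{d-1}$, and by hopping from neighbor to neighbor via the identity $v_2 = v_1 + [\bfn_F\cdot\nabla v]\,\bfn_F\cdot(\bfx - \bfx_F)$ one obtains $\|v\|_{\omega_{h,\bfx}}^2 \lesssim h\|v\|_{K_{h,\bfx}\cap\Sigma_h}^2 + h^3\|\bfn_h\cdot\nabla v\|_{K_{h,\bfx}\cap\Sigma_h}^2 + h^3\tnorm{v}_{\mcF_{h,\bfx}}^2$. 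Summing over the covering gives $\|v\|_{\Omega_h}^2 \lesssim h\|v\|_{\Sigma_h}^2 + h^3\|\nabla v\|_{\Sigma_h}^2 + h^3\tnorm{v}_{\mcF_h}^2$. Now the surface Poincar\'e (Lemma~\ref{lemmapoincaresigmah}) controls $\|v\|_{\Sigma_h}$ by $\|\nabla_{\Sigma_h} v\|_{\Sigma_h}$ directly, and the full-gradient term already carries the extra $h^2$ needed for Lemma~\ref{lemmaCondAA} to close it. The point is that variation \emph{along} the surface must be bounded by the \emph{tangential} gradient on $\Sigma_h$, not by the full bulk gradient on $\Omega_h$; your decomposition loses precisely this distinction.
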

\begin{proof} Using the same notation as in Lemma \ref{lemmaCondA}
we first show that there is a constant $C$ such that 
for each $\bfx\in \Sigma$ and $h$, $0<h\leq h_0$, 
there exists an element $K_{h,\bfx} \in \mcK_{h,\bfx}$ 
such that 
\begin{equation}
C h^2 \leq |K_{h,\bfx} ^l\cap \Sigma|
\end{equation}
We say that such an element has a large intersection with $\Sigma_h$.
Assume that there is no such element. Then there is a sequence $h_n \rightarrow 0$ such that 
\begin{equation}
|K^l\cap \Sigma|\leq \frac{h_n^2}{n}
\quad \forall K\in \mcK_{h,\bfx}, \quad n = 1,2,3,\dots 
\end{equation}
Since there is a uniform bound on the number of elements in 
$\mcK_{h,\bfx}$ we obtain the estimate
\begin{equation}
h_n^{-2}|\omega_h^l \cap \Sigma| \lesssim n^{-1}
\end{equation}
which is a contradiction since $h_n^2 \sim |D_{h,\bfx}| 
\lesssim |\omega_h^l\cap \Sigma|$ since $D_{h,\bfx}
\subset \omega_h^l\cap\Sigma$. Furthermore, the 
following estimate holds
\begin{equation}\label{eq:CondBa}
\|v\|^2_{\omega_{h,\bfx}}\lesssim 
h \|v\|^2_{K_{h,\bfx}\cap \Sigma_h} 
+ h^3 \|\bfn_h \cdot \nabla v\|^2_{K_{h,\bfx}\cap \Sigma_h} 
+ h^3 \tn v \tn^2_{\mcF_{h,\bfx}}
\end{equation}
where we introduced the notation
\begin{equation}
\tn v \tn^2_{\mcF_{h,\bfx}} = \sum_{F \in \mcF_{h,\bfx}} 
\|[\bfn_F \cdot \nabla v]\|_F^2
\end{equation}
To prove (\ref{eq:CondBa}), let $K_1$ and $K_2$ be two elements 
sharing a common face $F$. Then we have the following identity
\begin{equation}\label{eq:CondB:aa}
v_2 = v_1 + [\bfn_F \cdot \nabla v] \bfn_F \cdot (\bfx -\bfx_F)
\end{equation}
where $\bfx_F$ is the center of gravity of the face $F$. Thus
\begin{align}
\| v_2 \|^2_{K_2} &\lesssim \| v_1 \|^2_{K_2} + \| [\bfn_F \cdot \nabla v] \bfn_F \cdot (\bfx -\bfx_F)\|^2_{K_2}
\\ \label{CondTerm2}
&\lesssim \| v_1 \|^2_{K_1} + h^{3} \| [\bfn_F \cdot \nabla v] \|^2_F
\end{align}
Iterating this bound and summing over all elements in $\mcK_{h,\bfx}$ 
we obtain
\begin{equation}\label{eq:CondB:b}
\|v\|^2_{\omega_{h,\bfx}} 
\lesssim \|v\|^2_{K_{h,\bfx}} + h^3 \tn v \tn^2_{\mcF_{h,\bfx}}
\end{equation}
Here $\| v \|^2_{K_{h,\bfx}}$ may be estimated using the inverse bound
\begin{equation}\label{eq:CondB:c}
\| v \|^2_{K_{h,\bfx}} \lesssim h \| v \|^2_{K_{h,\bfx}\cap \Sigma_h} 
+ h^3 \|\bfn_h \cdot \nabla v \|^2_{K_{h,\bfx} \cap \Sigma_h}
\end{equation}
which holds due to quasiuniformity and the fact that the intersection 
with $\Sigma_h$ of $K_{h,\bfx}$ satisfies 
$h^2 \lesssim |K_{h,\bfx}^l \cap \Sigma|\sim|K_{h,\bfx} \cap \Sigma_h|$. Combining (\ref{eq:CondB:b}) and (\ref{eq:CondB:c}) we obtain (\ref{eq:CondBa}).

Using a covering $\{\omega_{h,\bfx} : \bfx \in \mathcal{X}_h \}$ 
of $\Omega_h$ with a uniformly bounded covering number as constructed in Lemma \ref{lemmaCondA}, we obtain 
\begin{align}
\|v\|_{\Omega_h}^2 
&\lesssim 
\sum_{\bfx \in \mathcal{X}_h} \|v\|_{\omega_{h,\bfx}}^2
\\
&\lesssim \sum_{\bfx \in \mathcal{X}_h}  h \|v\|^2_{K_{h,\bfx}\cap \Sigma_h} 
+ h^3 \|\bfn_h \cdot \nabla v\|^2_{K_{h,\bfx}\cap \Sigma_h} 
+ h^3 \tn v \tn^2_{\mcF_{h,\bfx}}
\\
&\lesssim h \| v \|^2_{\Sigma_h} + h^3 \|\nabla v \|^2_{\Sigma_h} 
 + h^3 \tn v \tn^2_{\mcF_h}
\\ \label{eq:CondA:a}
&\lesssim h \| v \|^2_{\Sigma_h} 
+ \Big( h^3 \|\nabla_{\Sigma_h} v \|^2_{\Sigma_h} + h \tn v \tn^2_{\mcF_h}
\Big)
+ h^3 \tn v \tn^2_{\mcF_h}
\\ \label{eq:CondA:b}
&\lesssim h \| \nabla_{\Sigma_h} v \|^2_{\Sigma_h} 
+ \Big( h^3 \|\nabla_{\Sigma_h} v \|^2_{\Sigma_h} + h \tn v \tn^2_{\mcF_h}
\Big)
+ h^3 \tn v \tn^2_{\mcF_h}
\\
&\lesssim h \tn v \tn^2_h
\end{align}
where we used the Poincar\'e inequality, see Lemma 
\ref{lemmapoincaresigmah}. This concludes the proof.
\end{proof}

We conclude this section with a version of the previous 
lemma which involves the $L^2$ norm instead of the energy 
norm.
\begin{lem}\label{lemmaCondBB} 
The following estimate holds
\begin{align}\label{poincareL2}
 \| v \|^2_{\Omega_h} &\lesssim h \| v \|^2_{\Sigma_h} 
 + h \tn v \tn^2_{\mcF_h} \quad \forall v \in V_h
\end{align}
for $0<h\leq h_0$ with $h_0$ small enough.
\end{lem}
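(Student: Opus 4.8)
The plan is to mimic the proof of Lemma~\ref{lemmaCondB} almost verbatim, replacing the one inverse estimate that brings in a gradient factor. The starting point is again the local bound \eqref{eq:CondBa}, which for every $\bfx\in\mathcal X_h$ gives, with $K_{h,\bfx}$ an element of large intersection,
\begin{equation}\label{eq:BB:start}
\|v\|^2_{\omega_{h,\bfx}}\lesssim h\|v\|^2_{K_{h,\bfx}\cap\Sigma_h}+h^3\|\bfn_h\cdot\nabla v\|^2_{K_{h,\bfx}\cap\Sigma_h}+h^3\tn v\tn^2_{\mcF_{h,\bfx}}.
\end{equation}
Summing over the covering $\{\omega_{h,\bfx}:\bfx\in\mathcal X_h\}$ with uniformly bounded covering number, exactly as in Lemma~\ref{lemmaCondB}, yields
\begin{equation}\label{eq:BB:sum}
\|v\|^2_{\Omega_h}\lesssim h\|v\|^2_{\Sigma_h}+h^3\|\nabla v\|^2_{\Sigma_h}+h^3\tn v\tn^2_{\mcF_h}.
\end{equation}
So the whole task reduces to absorbing the term $h^3\|\nabla v\|^2_{\Sigma_h}$.

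The key step is to invoke Lemma~\ref{lemmaCondAA}, which states precisely that $h^2\|\nabla v\|^2_{\Sigma_h}\lesssim h^2\|\nabla_{\Sigma_h}v\|^2_{\Sigma_h}+\tn v\tn^2_{\mcF_h}$. Multiplying by $h$ we get $h^3\|\nabla v\|^2_{\Sigma_h}\lesssim h^3\|\nabla_{\Sigma_h}v\|^2_{\Sigma_h}+h\tn v\tn^2_{\mcF_h}$, and inserting this into \eqref{eq:BB:sum} gives
\begin{equation}\label{eq:BB:final}
\|v\|^2_{\Omega_h}\lesssim h\|v\|^2_{\Sigma_h}+h^3\|\nabla_{\Sigma_h}v\|^2_{\Sigma_h}+h\tn v\tn^2_{\mcF_h}\lesssim h\|v\|^2_{\Sigma_h}+h\tn v\tn^2_{\mcF_h},
\end{equation}
where in the last step we used $h^3\|\nabla_{\Sigma_h}v\|^2_{\Sigma_h}\lesssim h\,\tn v\tn^2_{\Sigma_h}\lesssim h\,\tn v\tn^2_{\mcF_h}$ — wait, that is not quite available; more carefully, $h^3\|\nabla_{\Sigma_h}v\|^2_{\Sigma_h}=h^2\cdot h\tn v\tn^2_{\Sigma_h}\leq h\tn v\tn^2_{\Sigma_h}$ for $h\leq h_0$, and since $\tn v\tn^2_{\Sigma_h}\leq\tn v\tn^2_h$ this term is dominated by $h\tn v\tn^2_h$; but the statement to be proved only involves $\|v\|^2_{\Sigma_h}$ and $\tn v\tn^2_{\mcF_h}$, not $\tn v\tn^2_{\Sigma_h}$. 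The honest route is therefore not to drop $h^3\|\nabla_{\Sigma_h}v\|^2_{\Sigma_h}$ but to keep it: note $h^3\|\nabla_{\Sigma_h}v\|^2_{\Sigma_h}=h^2\|\nabla_{\Sigma_h}v\|^2_{\Sigma_h}\cdot h$, and by the Poincar\'e inequality of Lemma~\ref{lemmapoincaresigmah} applied to $v$ (which has $\int_{\Sigma_h}v=0$) we have $\|\nabla_{\Sigma_h}v\|^2_{\Sigma_h}\gtrsim\|v\|^2_{\Sigma_h}$ is the wrong direction; instead simply observe that the desired inequality is weaker than Lemma~\ref{lemmaCondB} composed with Lemma~\ref{lemmapoincaresigmah}: from Lemma~\ref{lemmaCondB}, $\|v\|^2_{\Omega_h}\lesssim h\tn v\tn^2_h=h\|\nabla_{\Sigma_h}v\|^2_{\Sigma_h}+h\tn v\tn^2_{\mcF_h}$, and it remains only to replace $h\|\nabla_{\Sigma_h}v\|^2_{\Sigma_h}$ by $h\|v\|^2_{\Sigma_h}$, which one cannot do in general — so the two lemmas are genuinely different and \eqref{eq:BB:sum} must be the true starting point.

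\textbf{The main obstacle}, then, is handling the $h^3\|\nabla v\|^2_{\Sigma_h}$ term in \eqref{eq:BB:sum} without reintroducing $\|\nabla_{\Sigma_h}v\|_{\Sigma_h}$ on the right. The clean way is to re-examine the inverse estimate \eqref{eq:CondB:c}: on the element $K_{h,\bfx}$ with large intersection, a scaled trace/inverse inequality gives $\|v\|^2_{K_{h,\bfx}}\lesssim h\|v\|^2_{K_{h,\bfx}\cap\Sigma_h}+h^3\|\nabla v\|^2_{K_{h,\bfx}}$, and then an element-wise inverse inequality bounds $h^2\|\nabla v\|^2_{K_{h,\bfx}}\lesssim\|v\|^2_{K_{h,\bfx}}$, so that $\|v\|^2_{K_{h,\bfx}}\lesssim h\|v\|^2_{K_{h,\bfx}\cap\Sigma_h}+h\|v\|^2_{K_{h,\bfx}}$; for $h\leq h_0$ the last term is absorbed, giving $\|v\|^2_{K_{h,\bfx}}\lesssim h\|v\|^2_{K_{h,\bfx}\cap\Sigma_h}$ with no gradient at all. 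Feeding this improved bound into \eqref{eq:CondB:b} yields $\|v\|^2_{\omega_{h,\bfx}}\lesssim h\|v\|^2_{K_{h,\bfx}\cap\Sigma_h}+h^3\tn v\tn^2_{\mcF_{h,\bfx}}$, and summing over the covering gives directly $\|v\|^2_{\Omega_h}\lesssim h\|v\|^2_{\Sigma_h}+h^3\tn v\tn^2_{\mcF_h}\lesssim h\|v\|^2_{\Sigma_h}+h\tn v\tn^2_{\mcF_h}$, which is the claim. Thus the proof is a one-line modification of Lemma~\ref{lemmaCondB}: drop the gradient term from the local inverse estimate by absorbing it against the elementwise inverse inequality, at the cost of the harmless factor that $h$ is small.
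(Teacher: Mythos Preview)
Your argument contains a scaling error that makes the ``one-line modification'' fail. The trace/inverse inequality you invoke in the final paragraph,
\[
\|v\|^2_{K_{h,\bfx}}\lesssim h\|v\|^2_{K_{h,\bfx}\cap\Sigma_h}+h^3\|\nabla v\|^2_{K_{h,\bfx}},
\]
has the wrong power of $h$: the correct scaling (pull back to a reference element, apply the estimate there, push forward) gives $h^2\|\nabla v\|^2_{K_{h,\bfx}}$, not $h^3$. Equivalently, starting from the paper's \eqref{eq:CondB:c} and using that $\nabla v$ is constant on $K_{h,\bfx}$ with $|K_{h,\bfx}\cap\Sigma_h|\sim h^{-1}|K_{h,\bfx}|$, one gets $h^3\|\bfn_h\cdot\nabla v\|^2_{K_{h,\bfx}\cap\Sigma_h}\sim h^2\|\bfn_h\cdot\nabla v\|^2_{K_{h,\bfx}}$. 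After the element-wise inverse inequality $h^2\|\nabla v\|^2_{K_{h,\bfx}}\lesssim\|v\|^2_{K_{h,\bfx}}$ you are left with $\|v\|^2_{K_{h,\bfx}}\lesssim h\|v\|^2_{K_{h,\bfx}\cap\Sigma_h}+C\|v\|^2_{K_{h,\bfx}}$ with $C=O(1)$, which cannot be absorbed. In fact the conclusion $\|v\|^2_{K_{h,\bfx}}\lesssim h\|v\|^2_{K_{h,\bfx}\cap\Sigma_h}$ is simply false: take $v$ to be the signed distance to the plane containing $K_{h,\bfx}\cap\Sigma_h$; then the right side vanishes while the left does not.

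You are, however, correct up to and including \eqref{eq:BB:sum} and the application of Lemma~\ref{lemmaCondAA}; that is exactly the paper's starting point \eqref{eq:CondBB:a}. The real work is to bound $h^3\|\nabla_{\Sigma_h}v\|^2_{\Sigma_h}$ by $h\|v\|^2_{\Sigma_h}+h\tn v\tn^2_{\mcF_h}$ plus a term that \emph{can} be absorbed. The paper does this patch by patch: on $K_{h,\bfx}$ the large intersection yields a genuine \emph{surface} inverse estimate $h^2\|\nabla_{\Sigma_{h,\bfx}}v\|^2_{\Sigma_h\cap K_{h,\bfx}}\lesssim\|v\|^2_{\Sigma_h\cap K_{h,\bfx}}$; then the tangential gradient (with the \emph{fixed} normal $\bfn_{h,K_{h,\bfx}}$) is propagated to the other elements of $\omega_{h,\bfx}$ using the jump identity, and the discrepancy between $\nabla_{\Sigma_h}$ and $\nabla_{\Sigma_{h,\bfx}}$ is $O(h)$ because the discrete normals vary by $O(h)$ over the patch. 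This yields $h^3\|\nabla_{\Sigma_h}v\|^2_{\Sigma_h}\lesssim h\|v\|^2_{\Sigma_h}+h^2\|v\|^2_{\Omega_h}+h^3\tn v\tn^2_{\mcF_h}$, and the $h^2\|v\|^2_{\Omega_h}$ term is now absorbable. The extra factor of $h$ you were hoping for does appear, but only after exploiting the surface-tangential structure, not from a bulk inverse inequality.
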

\begin{proof} Starting from (\ref{eq:CondA:b}) we have 
\begin{equation}\label{eq:CondBB:a}
\|v\|_{\Omega_h}^2 \lesssim  h \| v \|^2_{\Sigma_h} 
+ h^3 \|\nabla_{\Sigma_h} v \|^2_{\Sigma_h} + h \tn v \tn^2_{\mcF_h}
\end{equation}
and thus we need to estimate $h^3 \|\nabla_{\Sigma_h} v \|^2_{\Sigma_h}$. 
Using the same notation as above we obtain
\begin{align}
h^3 \|\nabla_{\Sigma_h} v \|^2_{\Sigma_h} 
&\lesssim 
\sum_{\bfx \in \mcX_h} 
h^3 \|\nabla_{\Sigma_h} v \|^2_{\Sigma_h \cap \omega_{h,\bfx}} 
\\
&\lesssim 
\sum_{\bfx \in \mcX_h} 
h^3 \|\nabla_{\Sigma_h} v \|^2_{\Sigma_h \cap K_{h,\bfx}}
+ 
h^3 \|\nabla_{\Sigma_h} v \|^2_{\Sigma_h \cap (\omega_{h,\bfx} 
\setminus K_{h,\bfx})} 
\\
&\lesssim \label{eq:CondBB:c}
\sum_{\bfx \in \mcX_h} 
h \| v \|^2_{\Sigma_h \cap K_{h,\bfx}}
+ 
\underbrace{h^2 \|\nabla_{\Sigma_h} v - \nabla_{\Sigma_{h,\bfx}} v \|^2_{\omega_{h,\bfx}}}_{I_{\bfx}} 
\\ \nonumber
&\qquad \qquad + \underbrace{h^2 \|\nabla_{\Sigma_{h,\bfx}} v \|^2_{\omega_{h,\bfx}}}_{II_{\bfx}}
\end{align}
where we used the fact that $K_{h,\bfx}$ has a large intersection 
with $\Sigma_h$ so that an inverse estimate holds for the tangential derivative and we also added and subtracted $\nabla_{\Sigma_{h,\bfx}} v$
where $\nabla_{\Sigma_{h,\bfx}}$ is the tangential gradient to 
$\Sigma_h \cap K_{h,\bfx}$. We proceed with estimates of $I_{\bfx}$ and 
$II_{\bfx}$.

\paragraph{Term $I_{\bfx}$} Using the definition of the tangential derivative we obtain the estimate
\begin{equation}
\|\nabla_{\Sigma_h} v - \nabla_{\Sigma_{h,\bfx}} v \|^2_{K} 
\lesssim  | \bfn_{h,K} - \bfn_{h,K_{h,\bfx}}|_{\IR^d} \| \nabla v \|^2_K\quad \forall K \in \mcK_{h,\bfx} 
\end{equation}
where $\bfn_{h,K}$ is the (constant) normal associated 
with element $K$. Now
\begin{equation}
|\bfn_{h,K} - \bfn_{h,K_{h,\bfx}}|_{\IR^d} \lesssim h, 
\quad \forall K \in \mcK_{h,\bfx}
\end{equation}
since
\begin{align}
|\bfn_{h,K} - \bfn_{h,K_{h,\bfx}}|_{\IR^d} &\leq 
|\bfn_{h,K} - \bfn(\bfy)|_{\IR^d} 
+ |\bfn_{h,K_{h,\bfx}}- \bfn(\bfx)|_{\IR^d}
+ |\bfn(\bfy) - \bfn(\bfx)|_{\IR^d}
\end{align} 
for any $\bfy \in K^l\cap \Sigma$. Using (\ref{eq:rhobounds}) we 
conclude that the first two terms are $O(h)$ and using the 
fundamental theorem of calculus we have the estimate
\begin{equation}
|\bfn(\bfy) - \bfn(\bfx)|_{\IR^d}
=|\nabla \rho(\bfy) - \nabla \rho(\bfy)|_{\IR^d}
\lesssim |\bfx - \bfy|_{\IR^d} \|\nabla \otimes \nabla \rho \|_{L^{\infty}(\Sigma)}
\lesssim h 
\end{equation}
for the third term. Thus we have the estimate
\begin{equation}
\|\nabla_{\Sigma_h} v - \nabla_{\Sigma_{h,\bfx}} v \|^2_{K} 
\lesssim h^2 \| \nabla v \|^2_K\quad \forall K \in \mcK_{h,\bfx} 
\end{equation}
which gives
\begin{equation}\label{eq:CondBB:I}
I_{\bfx} = h^2 \|\nabla_{\Sigma_h} v - \nabla_{\Sigma_h\cap K_{h,\bfx}} v \|^2_{\omega_{h,\bfx}} 
\lesssim h^4 \| \nabla v \|^2_{\omega_{h,\bfx}}
\lesssim h^2 \| v \|^2_{\omega_{h,\bfx}} 
\end{equation}
where we used an inverse estimate at last.

\paragraph{Term $II_{\bfx}$}
We let $\nabla_{\Sigma_{h,\bfx}}$ act on identity 
(\ref{eq:CondB:aa}), which gives
\begin{equation}
\nabla_{{\Sigma_{h,\bfx}}} v_2 = \nabla_{\Sigma_{h,\bfx}} v_1 
+ [\bfn_F \cdot \nabla v] \bfP_{\Sigma_{h,\bfx}} \bfn_F 
\end{equation}
and thus we have the estimate
\begin{equation}
\|\nabla_{{\Sigma_{h,\bfx}}} v_2 \|^2_{K_2} \lesssim 
\| \nabla_{\Sigma_{h,\bfx}} v_1 \|^2_{K_1} 
+ h \|[\bfn_F \cdot \nabla  v]\|^2_F 
\end{equation}
Iterating this bound and summing over all the elements in $\mcK_{h,\bfx}$, 
again using the fact that the number of elements in $\mcK_{h,\bfx}$ is uniformly bounded, we arrive at
\begin{equation}\label{eq:CondBB:aa}
\| \nabla_{{\Sigma_{h,\bfx}}} v \|^2_{\omega_{h,\bfx}} 
\lesssim 
\| \nabla_{\Sigma_{h,\bfx}} v \|^2_{K_{h,\bfx}} 
+ h \tn v \tn^2_{\mcF_{h,\bfx}} 
\end{equation}
Using (\ref{eq:CondBB:aa}) we obtain
\begin{align}
II_{\bfx} &=h^2 \|\nabla_{\Sigma_{h,\bfx}} v \|^2_{\omega_{h,\bfx}}
\\
&\lesssim 
h^2 \|\nabla_{\Sigma_{h,\bfx}} v \|^2_{K_{h,\bfx}}
+ h^3 \tn v \tn^2_{\mcF_{h,\bfx}}
\\
&\lesssim \label{eq:CondBB:II}
h \| v \|^2_{\Sigma_h \cap K_{h,\bfx}}
+ h^3 \tn v \tn^2_{\mcF_{h,\bfx}}
\end{align}
where we used the estimate 
\begin{equation}
h^2 \|\nabla_{\Sigma_{h,\bfx}} v \|^2_{K_{h,\bfx}}
\lesssim h^3 \|\nabla_{\Sigma_{h,\bfx}} v \|^2_{\Sigma_h \cap K_{h,\bfx}}
\lesssim h \| v \|^2_{\Sigma_h \cap K_{h,\bfx}}
\end{equation}
which holds since element $K_{h,\bfx}$ has a large intersection 
with $\Sigma_h$.

Collecting the estimates (\ref{eq:CondBB:c}), (\ref{eq:CondBB:I}), 
and (\ref{eq:CondBB:II}) we have
\begin{align}
h^3 \|\nabla_{\Sigma_h} v \|^2_{\Sigma_h} 
&\lesssim 
\sum_{\bfx \in \mcX_h} 
h \| v \|^2_{\Sigma_h \cap K_{h,\bfx}}
+ 
h \| v \|^2_{\omega_{h,\bfx}}
+ h^3 \tn v \tn^2_{\mcF_{h,\bfx}}
\\ \label{eq:CondBB:d}
&\lesssim h \| v \|^2_{\Sigma_h} + h^2 \| v \|^2_{\Omega_{h}} 
+ h^3 \tn v \tn^2_{\mcF_h}
\end{align}
Combining (\ref{eq:CondBB:a}) and (\ref{eq:CondBB:d}) yields
\begin{equation}
\|v\|_{\Omega_h}^2 \lesssim  h \| v \|^2_{\Sigma_h} 
+  h^2 \| v \|^2_{\Omega_{h}} +  h \tn v \tn^2_{\mcF_h}
\end{equation}
and the lemma follows for $0<h \leq h_0$, with $h_0$ small enough, 
using a kick back argument.
\end{proof}

\subsection{Inverse Estimate}

Here we derive the inverse inequality needed in the proof of the condition 
number estimate.

\begin{lem}\label{lemmaCondC} The following estimate holds
\begin{align}\label{inverse}
 \tn v \tn_h &\lesssim h^{-3/2} \| v \|_{\Omega_h} \quad \forall v \in {V}_h
\end{align}
for $0<h\leq h_0$ with $h_0$ small enough.
\end{lem}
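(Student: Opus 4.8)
The plan is to bound each of the two contributions to $\tnorm{v}_h^2 = \tnorm{v}_{\Sigma_h}^2 + \tnorm{v}_{\mcF_h}^2$ separately by $h^{-3}\|v\|_{\Omega_h}^2$, exploiting the fact that every $K \in \mcK_h$ has positive volume comparable to $h^d$ (by quasiuniformity of the background mesh) even though its intersection with $\Sigma_h$ can be arbitrarily small. The point is that $v$ is a piecewise linear polynomial on a shape-regular tetrahedron, so all its norms over lower-dimensional subsets of $K$ — whether over $\Sigma_h \cap K$, over a face $F$, or over the full simplex $K$ — are equivalent after inserting the appropriate power of $h$, with constants depending only on shape regularity. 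The loss of $h^{-3}$ rather than $h^{-2}$ (the scaling one sees for $\|\nabla v\|_{\Omega_h} \lesssim h^{-1}\|v\|_{\Omega_h}$) is exactly the extra $h^{-1}$ from trading a three-dimensional $L^2$ mass for a two-dimensional surface $L^2$ mass.

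**First I would** handle the face term. For each interior face $F$ shared by $K_1, K_2 \in \mcK_h$, the quantity $[\bfn_F \cdot \nabla v]$ is a constant, so $\|[\bfn_F \cdot \nabla v]\|_F^2 = |F|\,|[\bfn_F\cdot\nabla v]|^2 \lesssim h^2 |[\bfn_F\cdot\nabla v]|^2$. Since $\nabla v|_{K_i}$ is a constant vector, a standard inverse estimate on the shape-regular simplex gives $|\nabla v|_{K_i}|^2 \lesssim h^{-d}\|\nabla v\|_{K_i}^2 \lesssim h^{-d-2}\|v\|_{K_i}^2$ (inverse estimate in $L^2(K_i)$). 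Hence $\|[\bfn_F\cdot\nabla v]\|_F^2 \lesssim h^{2-d-2}(\|v\|_{K_1}^2 + \|v\|_{K_2}^2) = h^{-d}(\|v\|_{K_1}^2+\|v\|_{K_2}^2)$, and with $d=3$ this is $h^{-3}(\|v\|_{K_1}^2+\|v\|_{K_2}^2)$. Summing over $F \in \mcF_h$ and using that each element has a bounded number of faces, $\tnorm{v}_{\mcF_h}^2 = j_h(v,v) = \sum_F \tau_0\|[\bfn_F\cdot\nabla v]\|_F^2 \lesssim h^{-3}\|v\|_{\Omega_h}^2$.

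**Next I would** handle the energy term $\tnorm{v}_{\Sigma_h}^2 = \|\nabla_{\Sigma_h} v\|_{\Sigma_h}^2$. Since $|\bfPsh| \le 1$ pointwise, it suffices to bound $\|\nabla v\|_{\Sigma_h}^2$. Fix $K \in \mcK_h$; on $K$, $\nabla v$ is a constant vector, so $\|\nabla v\|_{\Sigma_h \cap K}^2 = |\Sigma_h \cap K|\,|\nabla v|_K|^2 \lesssim h^2 |\nabla v|_K|^2$, using the trivial bound $|\Sigma_h \cap K| \lesssim h^{d-1} = h^2$ on the area of the planar cut of a simplex of diameter $\lesssim h$. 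Then, as above, $|\nabla v|_K|^2 \lesssim h^{-d}\|\nabla v\|_K^2 \lesssim h^{-d-2}\|v\|_K^2 = h^{-5}\|v\|_K^2$ for $d=3$. Combining, $\|\nabla v\|_{\Sigma_h\cap K}^2 \lesssim h^{2-5}\|v\|_K^2 = h^{-3}\|v\|_K^2$; summing over $K \in \mcK_h$ gives $\tnorm{v}_{\Sigma_h}^2 \lesssim h^{-3}\|v\|_{\Omega_h}^2$. Adding the two contributions yields $\tnorm{v}_h^2 \lesssim h^{-3}\|v\|_{\Omega_h}^2$, which is (\ref{inverse}).

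**The main obstacle** — really the only subtlety — is making sure that none of the inverse-type estimates silently require $\Sigma_h \cap K$ (or $F$) to be non-degenerate: the cut geometry is exactly what we must \emph{not} control. I avoid this by never going \emph{from} a surface norm back \emph{to} a volume norm of $v$ directly on $K$; instead I go through the constant value of $\nabla v$ on $K$, for which the only inverse estimate used is the completely standard $\|\nabla v\|_K \lesssim h^{-1}\|v\|_K$ on the full shape-regular simplex, plus the elementary geometric facts $|F| \lesssim h^{d-1}$ and $|\Sigma_h\cap K| \lesssim h^{d-1}$, which hold regardless of how the surface cuts the element (a planar section of a convex set of diameter $D$ has area at most $\lesssim D^{d-1}$). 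Thus all constants depend only on the shape regularity and quasiuniformity of $\mcK_{h,0}$ and on $\tau_0$, and are independent of the position of $\Sigma_h$ relative to the mesh, as required.
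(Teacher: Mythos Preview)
Your argument is correct and follows essentially the same route as the paper: exploit that $\nabla v$ is constant on each element to pass from surface/face integrals to volume integrals at the cost of a factor $|\Sigma_h\cap K|/|K|\lesssim h^{-1}$ (respectively $|F|/|K|\lesssim h^{-1}$), then apply the standard inverse estimate $\|\nabla v\|_K\lesssim h^{-1}\|v\|_K$. Your explicit remark that none of these steps require a lower bound on $|\Sigma_h\cap K|$ is a useful clarification the paper leaves implicit.
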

\begin{proof} Using the fact that $\nabla v|_K$ is constant we note that 
$\|\nabla v \|^2_{\Sigma_h \cap K} \lesssim h^{-1} \|\nabla v \|^2_{K}$, which leads to the estimate
\begin{equation}
\| \bfPsh \nabla v\|^2_{\Sigma_h} \lesssim \| \nabla v \|^2_{\Sigma_h}
\lesssim  h^{-1} \| \nabla v \|^2_{\Omega_h} \lesssim h^{-3} \| v \|_{\Omega_h}^2
\end{equation}
where we used an element wise inverse inequality at last. Furthermore, 
using standard inverse inequalities, we obtain the following estimate 
for the jump term
\begin{equation}
\tn v \tn^2_{\mcF_h} \lesssim \sum_{K \in \mcK_h} \| \bfn_K \cdot \nabla v \|^2_{\partial K}
\lesssim \sum_{K \in \mcK_h} h^{-1} \| \nabla v \|^2_K \lesssim h^{-3} \| v \|^2_{\Omega_h}
\end{equation}
\end{proof}

\subsection{Condition Number Estimate}
To derive an estimate of the condition number of the stiffness matrix 
we use the Poincar\'e inequality in Lemma \ref{lemmaCondB} and the 
inverse estimate in Lemma \ref{lemmaCondC} together with the approach 
in \cite{ErGu06}.

Let $\{\varphi_i\}_{i=1}^N$ be the standard piecewise linear basis 
functions associated with the nodes in $\mcK_h$ and let $\mcA$ be 
the stiffness matrix with elements $a_{ij} = A_h(\varphi_i,\varphi_j)$. 
We recall that the condition number is defined by
\begin{equation}
\kappa(\mcA) = | \mcA |_{\IR^N} |\mcA^{-1} |_{\IR^N}
\end{equation}
where $|X|^2_{\IR^N} = \sum_{i=1}^N X_i^2$ for $X \in \IR^N$ and $|\mcA|_{\IR^N} = \sup_{|X|_{\IR^N} =1} |\mcA X|_{\IR^N}$
for $\mcA \in \IR^{N\times N}$. The expansion $v = \sum_{i=1}^N V_i \varphi_i$ defines an
isomorphism that maps $ v \in \mcV_h$ to $V \in \IR^N$ and satisfies 
the following well known estimates 
\begin{equation}\label{rneqv}
c h^{-d/2} \| v \|_{\Omega_h} \leq | V |_{\IR^N} \leq C h^{-d/2}\| v \|_{\Omega_h}
\end{equation}

\begin{thm} The following estimate of the condition number of the 
stiffness matrix holds
\begin{equation}
\kappa( \mcA )\lesssim h^{-2}
\end{equation}
for $0<h\leq h_0$ with $h_0$ small enough.
\end{thm}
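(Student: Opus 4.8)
The plan is to follow the standard spectral argument of Ern and Guermond \cite{ErGu06}. The stiffness matrix $\mcA$ is symmetric, since both $a_h$ and $j_h$ are symmetric, and positive definite, since $(\mcA V, V)_{\IR^N} = A_h(v,v) = \tnorm{v}_h^2 > 0$ for every $v = \sum_{i=1}^N V_i \varphi_i \neq 0$, using that $\tnorm{\cdot}_h$ is a norm on $\mcV_h$ as noted in Section 2. Hence $\kappa(\mcA) = \lambda_{\max}(\mcA)/\lambda_{\min}(\mcA)$, and it suffices to show $\lambda_{\max}(\mcA) \lesssim 1$ and $\lambda_{\min}(\mcA) \gtrsim h^2$.

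\textbf{Upper bound.} For $V \in \IR^N$ corresponding to $v \in \mcV_h$, the cut inverse estimate of Lemma \ref{lemmaCondC} gives $\tnorm{v}_h \lesssim h^{-3/2}\|v\|_{\Omega_h}$, while the right inequality in (\ref{rneqv}) with $d=3$ gives $\|v\|_{\Omega_h} \lesssim h^{3/2}|V|_{\IR^N}$. Combining, $(\mcA V, V)_{\IR^N} = \tnorm{v}_h^2 \lesssim |V|_{\IR^N}^2$, so $\lambda_{\max}(\mcA) \lesssim 1$.

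\textbf{Lower bound.} The discrete Poincar\'e inequality of Lemma \ref{lemmaCondB} gives $\|v\|_{\Omega_h} \lesssim h^{1/2}\tnorm{v}_h$, that is $\tnorm{v}_h^2 \gtrsim h^{-1}\|v\|_{\Omega_h}^2$, and the left inequality in (\ref{rneqv}) gives $\|v\|_{\Omega_h} \gtrsim h^{3/2}|V|_{\IR^N}$. Hence $(\mcA V, V)_{\IR^N} = \tnorm{v}_h^2 \gtrsim h^{-1}\cdot h^{3}|V|_{\IR^N}^2 = h^2 |V|_{\IR^N}^2$, so $\lambda_{\min}(\mcA) \gtrsim h^2$. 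Multiplying the two bounds yields $\kappa(\mcA) \lesssim h^{-2}$.

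\textbf{Main obstacle.} There is essentially no obstacle left at this stage: the entire difficulty has already been packed into the discrete Poincar\'e inequality (Lemma \ref{lemmaCondB}), whose constant is uniform in $h$ irrespective of how $\Sigma_h$ cuts the background mesh, and into the cut inverse estimate (Lemma \ref{lemmaCondC}), which exploits the gradient-jump stabilization $j_h$ to control $v$ on whole elements from its values on $\Sigma_h$. The remaining work is bookkeeping; the only points needing a little care are verifying that $\mcA$ is genuinely symmetric positive definite so that the spectral characterization of $\kappa(\mcA)$ applies, and keeping track of the exponents of $h$ in the two uses of the norm equivalence (\ref{rneqv}).
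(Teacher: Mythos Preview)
Your proof is correct and follows essentially the same route as the paper: both arguments combine the inverse estimate of Lemma~\ref{lemmaCondC} with the discrete Poincar\'e inequality of Lemma~\ref{lemmaCondB} and the norm equivalence~(\ref{rneqv}), the only cosmetic difference being that you bound eigenvalues via the Rayleigh quotient while the paper bounds $|\mcA|_{\IR^N}$ and $|\mcA^{-1}|_{\IR^N}$ directly. One small correction to your closing commentary: the stabilization $j_h$ is what makes Lemma~\ref{lemmaCondB} (the Poincar\'e estimate) work, not Lemma~\ref{lemmaCondC}, which is a plain inverse inequality that holds with or without stabilization.
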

\begin{proof} We need to estimate $| \mcA |_{\IR^N}$ and 
$|\mcA^{-1}|_{\IR^N}$. Starting with 
$| \mcA |_{\IR^N}$ we have
\begin{align}
|\mcA V|_{\IR^N} &= \sup_{W \in \IR^N } \frac{(W,\mcA V)_{\IR^N}}{| W |_{\IR^N}}
\\
&= \sup_{w \in \mcV_h }  \frac{A_h(v,w)}{\tn w \tn_h} \frac{\tn w \tn_h}{| W |_{\IR^N}}
\\
&\lesssim h^{d-3}| V|_{\IR^N}
\end{align}
where we used the estimate
\begin{equation}
\tn w \tn_h \lesssim h^{-3/2} \| w \|_{\Omega_h} 
\lesssim h^{(d-3)/2}|W|_{\IR^N}
\end{equation}
together with (\ref{inverse}) and (\ref{rneqv}). Thus 
\begin{equation}\label{Aest}
|\mcA|_{\IR^N} \lesssim h^{d-3}
\end{equation}

Next we turn to the estimate of $|\mcA^{-1}|_{\IR^N}$.
Using (\ref{rneqv}) and (\ref{poincare}), we get
\begin{multline}
|V|^2_{\IR^N} \lesssim h^{-d} \| v \|^2_{\Omega_h} \lesssim h^{1-d} \tn v \tn_h^2
\\
\lesssim h^{1-d} A_h(v,v) = h^{1-d} (V, \mcA V)_{\IR^N} \lesssim h^{1-d} |V|_{\IR^N} |\mcA V|_{\IR^N}
\end{multline}
and thus we conclude that $|V|_{\IR^N} \lesssim h^{1-d}|\mcA V|_{\IR^N}$. Setting $ V = \mcA^{-1} W$ we obtain
\begin{equation}\label{Ainvest}
|\mcA^{-1}|_{\IR^N} \lesssim h^{1-d}
\end{equation}
Combining estimates (\ref{Aest}) and (\ref{Ainvest}) of $|\mcA|_{\IR^N}$ 
and $|\mcA^{-1}|_{\IR^N}$ the theorem follows.
\end{proof}

\section{A Priori Error Estimates}
In this section we derive a priori error estimates in the energy and 
$L^2$-norms. The main technical difficulty is to handle the fact that 
the surface is approximated by a discrete surface. Our approach 
essentially follows \cite{De09}, \cite{DeDz07}, and \cite{OlReGr09}.

\subsection{Interpolation Error Estimates}

In order to define an interpolation operator we note that the extension 
$v^e$ of $v \in H^s(\Sigma)$ satisfies the stability estimate
\begin{equation}\label{eq:ext_stab}
\| v^e \|_{s,U_{\delta}(\Sigma)} \lesssim \delta^{\frac12}  \| v \|_{s,\Sigma},\quad s=0,1,2, \quad 0<\delta\leq \delta_0
\end{equation}
with constant only dependent on the curvature of the surface $\Sigma$. 

We let $\pi_h: L^2(\Omega_h) \rightarrow \mcV_h|_{\Sigma_h}$ denote the standard 
Scott-Zhang interpolation operator and recall the interpolation error estimate
\begin{equation}\label{interpolstandard}
\| v - \pi_h v \|_{m,K} \leq C h^{2-m} \| v \|_{\mcN(K)}, \quad m = 1,2
\end{equation}
where $\mcN(K)\subset \Omega_h$ is the union of the neighboring elements of $K$. We 
also define an interpolation operator 
$\pi_h^l:L^2(\Sigma) \rightarrow (\mcV_h|_{\Sigma_h})^l$ 
as follows
\begin{equation}\label{pihl}
\pi_h^l v =  ( (\pi_{h} v^e) |_{\Sigma_h})^l
\end{equation}

Introducing the energy norm $\tn \cdot \tn_\Sigma$ associated with the 
exact surface 
\begin{equation}
\tn v \tn^2_{\Sigma} = a(v,v) 
\end{equation}
we have the following approximation property.
\begin{lem}\label{lem:approx} The following estimate holds
\begin{equation}\label{interpol}
\tn u - \pi_h^l u \tn_\Sigma^2 + \tn u^e - \pi_h u^e \tn^2_{\mcF_h} 
\lesssim h^2 \| u \|^2_{2,\Sigma}
\end{equation}
for $0<h\leq h_0$ with $h_0$ small enough.
\end{lem}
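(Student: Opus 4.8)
The plan is to estimate the two contributions separately, in each case reducing the quantity to an element-wise trace estimate on the background mesh $\mcK_h$, which is then controlled by the Scott--Zhang bound \eqref{interpolstandard} together with the extension stability \eqref{eq:ext_stab}. For the first term, the key preliminary observation is that on $\Sigma_h \cap K$ one has $\rho_{h,K} = 0$, so the local lift $G_K$ reduces there to the closest point projection $\bfp$; consequently $u = (u^e|_{\Sigma_h})^l$ and hence $u - \pi_h^l u = w^l$ with $w := (u^e - \pi_h u^e)|_{\Sigma_h}$. Using the gradient identity $\nablas w^l = \bfB^{-T}\nabla_{\Sigma_h} w$, the change of variables $d\sigma = |\bfB|\,d\sigma_h$, and the uniform bounds $\|\bfB^{-1}\|_{L^\infty(\Sigma_h)}\lesssim 1$, $\||\bfB|\|_{L^\infty(\Sigma_h)}\lesssim 1$ from \eqref{Bestimates}--\eqref{detBbounds}, together with $\|\bfPsh\nabla\phi\|\le\|\nabla\phi\|$, this gives
\begin{equation}
\tn u - \pi_h^l u \tn_\Sigma^2 \lesssim \|\nabla_{\Sigma_h}(u^e - \pi_h u^e)\|_{\Sigma_h}^2 \le \sum_{K \in \mcK_h} \|\nabla(u^e - \pi_h u^e)\|_{\Sigma_h \cap K}^2 .
\end{equation}

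For the jump term I would use that $u^e$ is smooth on $U_{\delta_0}(\Sigma)$, so $[\bfn_F \cdot \nabla u^e] = 0$ on every $F \in \mcF_h$, and hence $\tn u^e - \pi_h u^e \tn_{\mcF_h}^2 = \sum_{F} \tau_0 \|[\bfn_F \cdot \nabla(\pi_h u^e - u^e)]\|_F^2$. Bounding each face jump by the two one-sided element traces and using that each element has a bounded number of faces reduces this to $\lesssim \sum_{K \in \mcK_h} \|\nabla(u^e - \pi_h u^e)\|_{\partial K}^2$.

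Both sums are now of the form $\sum_{K\in\mcK_h} \|\nabla(u^e - \pi_h u^e)\|_S^2$ with $S = \Sigma_h \cap K$ or $S = \partial K$. For each I would apply the trace inequality $\|\phi\|_S^2 \lesssim h^{-1}\|\phi\|_K^2 + h\|\nabla\phi\|_K^2$ to $\phi = \nabla(u^e - \pi_h u^e)$; since $\pi_h u^e$ is piecewise linear, $D^2\pi_h u^e = 0$ on $K$, so with \eqref{interpolstandard} this gives
\begin{equation}
\|\nabla(u^e - \pi_h u^e)\|_S^2 \lesssim h^{-1}\|\nabla(u^e - \pi_h u^e)\|_K^2 + h\|D^2 u^e\|_K^2 \lesssim h\,\|u^e\|_{2,\mcN(K)}^2 .
\end{equation}
Summing over $K \in \mcK_h$ and using the finite overlap of the patches $\mcN(K)$ yields $\lesssim h\,\|u^e\|_{2,\Omega_h}^2$ for each of the two terms. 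Since the elements of $\mcK_h$ meet $\Sigma_h \subset U_{\delta_0}(\Sigma)$ and have diameter $\lesssim h$, the set $\Omega_h$ lies in a tubular neighborhood $U_{ch}(\Sigma)$ of thickness $O(h)$, so \eqref{eq:ext_stab} with $\delta = ch$ gives $\|u^e\|_{2,\Omega_h}^2 \le \|u^e\|_{2,U_{ch}(\Sigma)}^2 \lesssim h\,\|u\|_{2,\Sigma}^2$. Combining the estimates for the two terms proves the lemma.

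The step I expect to be the main obstacle is the element-wise trace inequality $\|\phi\|_{\Sigma_h \cap K}^2 \lesssim h^{-1}\|\phi\|_K^2 + h\|\nabla\phi\|_K^2$ with a constant independent of how the hyperplane $\{\rho_{h,K} = 0\}$ slices $K$: this is the only place where the geometry of the cut enters (the version on $\partial K$ being entirely standard), it holds for a planar cut of a shape-regular simplex but must be stated with care, and it is precisely this that makes it natural to pass between $\Sigma_h$ and $\Sigma$ via the flat local lift $G_K$ rather than the curved projection $\bfp$.
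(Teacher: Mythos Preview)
Your proposal is correct and follows essentially the same route as the paper: lift from $\Sigma$ to $\Sigma_h$ via $\bfB$, apply the element-wise trace inequality $\|\phi\|_{\Sigma_h\cap K}^2\lesssim h^{-1}\|\phi\|_K^2+h\|\nabla\phi\|_K^2$ to $\phi=\nabla(u^e-\pi_h u^e)$, use the Scott--Zhang bound \eqref{interpolstandard}, sum, and close with the extension stability \eqref{eq:ext_stab} on a tube of width $O(h)$. Your identification of the cut-element trace inequality as the crucial ingredient is exactly the point the paper isolates (it is quoted there as \eqref{eq:trace}, citing \cite{HaHaLa04}); the treatment of the jump term, which the paper dismisses in one sentence, is spelled out in your proposal with the same standard face-trace argument.
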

\begin{proof} We first recall the element wise trace inequality
\begin{equation}\label{eq:trace}
\| v \|^2_{\Sigma_h \cap K} \lesssim h^{-1} \| v \|_K^2 
+ h \| \nabla v \|_K^2
\end{equation}
which holds with a uniform constant independent of the intersection, 
see Lemma 4.2 in  \cite{HaHaLa04}. To estimate the first term we change domain 
of integration from $\Sigma$ to $\Sigma_h$ and then use the trace inequality (\ref{eq:trace}) as follows
\begin{align}
\tn u - \pi_h^l u \tn_\Sigma^2 &= \int_\Sigma |\nablas(u - \pi_h^l u)|^2 \ds
\\
&=\int_{\Sigma_h} |\bfB^{-T}\nablash(u^e- \pi_h u)|^2 |\bfB| \dsh
\\
&\lesssim \sum_{K \in \mcK_h} \| \nabla (u^e- \pi_h u) \|^2_{K\cap \Sigma_h}
\\
&\lesssim\sum_{K \in \mcK_h} h^{-1} \| u^e- \pi_h u \|_{1,K}^2 + h \| u^e- \pi_h u \|_{2,K}^2
\\
&\lesssim \sum_{K \in \mcK_h} h \|u^e\|^2_{2,\mcN(K)}
\\
&\lesssim h \|u^e\|^2_{2,U_{\delta}(\Sigma)}
\\
&\lesssim h \delta \| u \|^2_{2,\Sigma}
\end{align}
where we used the interpolation estimate (\ref{interpolstandard}) 
followed by the stability estimate (\ref{eq:ext_stab}) for the 
extension operator. Observing that $\Omega_h \subset U_{\delta}(\Sigma)$ with $\delta\sim h$ we arrive at
\begin{equation}
\tn u - \pi_h^l u \tn_\Sigma^2 \lesssim h^2 \| u \|^2_{2,\Sigma}
\end{equation}
The second term can be directly estimated using the elementwise trace inequality followed by (\ref{interpolstandard}).
\end{proof}

\subsection{Error Estimates}

\begin{thm}\label{thmenergy} The following a priori error estimate holds
\begin{equation}\label{eqenergy}
\tn u - u_h^l \tn^2_{\Sigma} + \tn u^e - u_h \tn_{\mcF}^2 \lesssim 
h^2 \|f\|^2_\Sigma
\end{equation}
for $0<h\leq h_0$ with $h_0$ small enough.
\end{thm}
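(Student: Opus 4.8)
The plan is to run a standard Strang-type argument adapted to the non-matching surface, comparing $u_h$ to the interpolant $\pi_h u^e$ and exploiting both Galerkin orthogonality (modulo the quadrature/geometry errors from $\Sigma$ versus $\Sigma_h$) and the consistency of the stabilization term. First I would introduce the natural energy quantity on the discrete side, $\tn v \tn_h^2 = \tn v\tn_{\Sigma_h}^2 + \tn v\tn_{\mcF_h}^2$, and establish coercivity $\tn v\tn_h^2 \lesssim A_h(v,v)$ (immediate, with equality) and a boundedness bound $A_h(v,w)\lesssim \tn v\tn_h \tn w\tn_h$. Writing $e_h = \pi_h u^e - u_h \in \mcV_h$, coercivity gives $\tn e_h\tn_h^2 \lesssim A_h(e_h,e_h) = A_h(\pi_h u^e - u, e_h) + \bigl(A_h(u^e,e_h) - l_h(e_h)\bigr)$, where by $A_h(u^e,e_h)$ I mean $a_h(u^e,e_h)+j_h(u^e,e_h)$ with $u^e$ the extension. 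The first term is controlled by the interpolation estimate of Lemma~\ref{lem:approx} after passing between the $\tn\cdot\tn_\Sigma$ and $\tn\cdot\tn_h$ norms via the change-of-variables bounds \eqref{Bestimates} and \eqref{detBbounds}; the second term is the geometric/quadrature consistency error that I analyze next.

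Second I would bound the consistency error $A_h(u^e,e_h) - l_h(e_h)$. Here the $j_h$ contribution $j_h(u^e,e_h)$ must be shown small: since $u^e$ is the constant-normal-derivative extension of a smooth function, $\bfn_F\cdot\nabla u^e$ is not exactly continuous across faces but its jumps are $O(h)$ in the relevant norm because $u\in H^2(\Sigma)$ and the extension is smooth transverse to $\Sigma$; more precisely $\|[\bfn_F\cdot\nabla u^e]\|_F \lesssim h^{1/2}\|u^e\|_{2,\mcN(F)}$, summing to $j_h(u^e,e_h)\lesssim h\|u\|_{2,\Sigma}\tn e_h\tn_h$. For the $a_h$ part I would use the identity $\nabla_{\Sigma_h} v = \bfB^T(\nabla_\Sigma v^l)\circ G_K$ together with $d\sigma = |\bfB|\,d\sigma_h$ to rewrite $a_h(u^e, e_h) = (\nabla_{\Sigma_h} u^e,\nabla_{\Sigma_h} e_h)_{\Sigma_h}$ in terms of integrals over $\Sigma$, producing a perturbation of $a(u, e_h^l) = l(e_h^l) = (f, e_h^l)_\Sigma$; the perturbation is governed by $\|\bfI - \bfB\bfB^T\|_{L^\infty(\Sigma_h)}\lesssim h^2$ and $\|1-|\bfB|\|_{L^\infty(\Sigma_h)}\lesssim h^2$, and by the mismatch between $(f,e_h^l)_\Sigma$ and $l_h(e_h) = (f^e,e_h)_{\Sigma_h}$, again $O(h^2)$ after change of variables. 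Combining, $|A_h(u^e,e_h) - l_h(e_h)| \lesssim h\,\|f\|_\Sigma\,\tn e_h\tn_h$ (using elliptic regularity $\|u\|_{2,\Sigma}\lesssim\|f\|_\Sigma$ to absorb the geometric terms, which are $O(h^2)\|u\|_{2,\Sigma}$, into the slower $O(h)$ rate). Dividing through yields $\tn e_h\tn_h \lesssim h\|f\|_\Sigma$.

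Third, a triangle inequality assembles the final bound. For the energy part, $\tn u - u_h^l\tn_\Sigma \le \tn u - \pi_h^l u\tn_\Sigma + \tn (\pi_h u^e - u_h)^l\tn_\Sigma$; the first summand is $\lesssim h\|u\|_{2,\Sigma}$ by Lemma~\ref{lem:approx}, and the second is comparable to $\tn e_h\tn_{\Sigma_h}\le\tn e_h\tn_h\lesssim h\|f\|_\Sigma$ using the norm equivalence coming from \eqref{Bestimates}. For the jump part, $\tn u^e - u_h\tn_{\mcF} \le \tn u^e - \pi_h u^e\tn_{\mcF_h} + \tn e_h\tn_{\mcF_h}$, both $\lesssim h\|u\|_{2,\Sigma}$; here one must be slightly careful that $\tn\cdot\tn_{\mcF}$ in the statement is the stabilization seminorm $j_h(\cdot,\cdot)^{1/2}$ evaluated on the relevant faces. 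Invoking $\|u\|_{2,\Sigma}\lesssim\|f\|_\Sigma$ throughout gives \eqref{eqenergy}.

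\textbf{Main obstacle.} The delicate step is the geometric consistency analysis of $a_h(u^e,e_h) - l_h(e_h)$: one has to track carefully how the change of domain from $\Sigma_h$ to $\Sigma$ via $G_K$ interacts with the fact that $e_h^l$ is only $H^1$ (not smooth), ensuring the perturbation terms pair against $\|\nabla_\Sigma e_h^l\|_\Sigma \lesssim \tn e_h\tn_h$ rather than a stronger norm, and in particular verifying that the $O(h^2)$ geometric factors multiply $\|u\|_{2,\Sigma}$ and not something involving second derivatives of $e_h$. The treatment of $j_h(u^e,e_h)$ — bounding the normal-gradient jumps of the extension $u^e$ — is the second point requiring care, since it relies on the specific (closest-point) structure of the extension to get the $O(h)$ rate; a crude bound would only give $O(1)$ and destroy convergence.
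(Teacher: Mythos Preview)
Your approach is essentially the same Strang-type argument as the paper's, with one organizational difference: you insert $u^e$ on the discrete side and compare $a_h(u^e,e_h)$ with $a(u,e_h^l)=l(e_h^l)$, whereas the paper inserts $u$ in the $a(\cdot,\cdot)$ part on $\Sigma$ (and $u^e$ in the jump part), so that their geometric consistency contribution (their Term~$III$) is $a_h(u_h,v)-a(u_h^l,v^l)$, involving the \emph{discrete} solution. Consequently the paper needs the discrete stability bound $\tn u_h^l\tn_\Sigma\lesssim\|f\|_\Sigma$ at that step, which your arrangement avoids. Both routes give the same $O(h^2)$ geometric consistency via \eqref{Bestimates} and \eqref{detBbounds}.

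Your stated ``main obstacle'' concerning $j_h(u^e,e_h)$ is, however, a non-issue, and your treatment of it is the one genuinely incorrect step. The extension $u^e=u\circ\bfp$ lies in $H^2(U_\delta(\Sigma))\supset H^2(\Omega_h)$ by the stability estimate \eqref{eq:ext_stab}, so $\nabla u^e\in H^1(\Omega_h)$ and its trace on any interior face $F$ is single-valued; hence $[\bfn_F\cdot\nabla u^e]=0$ and $j_h(u^e,v)=0$ for every $v$. (The paper uses this silently when passing from $j_h(\pi_h u^e-u_h,\cdot)$ to $j_h(\pi_h u^e-u^e,\cdot)-j_h(u_h,\cdot)$.) There is no need to estimate jumps of $\nabla u^e$, and your consistency error $A_h(u^e,e_h)-l_h(e_h)$ is purely the geometric $O(h^2)$ term, not $O(h)$. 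This does not damage your final bound, which is governed by the $O(h)$ interpolation error anyway, but it removes what you flagged as the delicate point.
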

\begin{proof} Adding and subtracting an interpolant $\pi_h^l u$, defined 
by (\ref{pihl}), and $\pi_h u^e$, and using the triangle inequality we have
\begin{align}
 \tn u - u_h^l \tn_{\Sigma}^2 + \tn u^e - u_h \tn_{\mcF_h}^2 
&\lesssim \tn u^e - \pi_h^l u \tn_{\Sigma}^2 + \tn u^e - \pi_h u \tn_{\mcF_h}^2
\\ \nonumber
&\qquad+
\tn \pi_h^l u - u_h^l \tn^2_{\Sigma} + \tn \pi_h u^e - u_h \tn^2_{\mcF_h}
\end{align}
Here the first two terms can be immediately estimated using the interpolation 
error estimate (\ref{interpol}). For the third and fourth we have the following 
identity
\begin{align}\nonumber
&a(\pi_h^l u-u_h^l,\pi_h^l u - u_h^l) + j(\pi_h u^e -u_h,\pi_h u^e-u_h)
\\\nonumber
&\qquad =a(\pi_h^l u - u,\pi_h^l u-u_h^l) + j(\pi_h u^e - u^e,\pi_h u^e -u_h) 
+ l( \pi_h^l u-u_h^l) 
\\ \nonumber
&\qquad \qquad -  a(u_h^l,\pi_h^l u-u_h^l) - j(u_h,\pi_h u^e-u_h)
\\\nonumber
&\qquad= a(\pi_h^l u - u,\pi_h^l u-u_h^l) + j(\pi_h u^e - u^e,\pi_h u^e -u_h) 
\\ \nonumber
&\qquad \qquad + l( \pi_h^l u-u_h^l) - l_h(\pi_h u^e - u_h) 
\\ \nonumber
&\qquad \qquad 
+ a_h(u_h,\pi_h u^e - u_h) - a(u_h^l,\pi_h^l u - u_h^l)
\end{align}
Estimating the right hand side we obtain
\begin{align}\nonumber
\left(\tn \pi_h^l u-u_h^l \tn^2_{\Sigma} + \tn u^e - u_h \tn^2_{\mcF_h} \right)^{1/2} 
&\lesssim \left( \tn u - \pi_h^l u \tn^2_\Sigma + \tn u^e - \pi_h u^e \tn^2_{\mcF_h} \right)^{1/2}
\\ \nonumber 
&\qquad + \sup_{v \in \mcV_h} \frac{l( v^l) - l_h(v)}{\tn v^l \tn_\Sigma}
\\ \nonumber 
&\qquad + \sup_{v \in \mcV_h} \frac{a_h^l(u_h^l,v^l) - a_h(u_h,v)}{\tn v^l \tn_\Sigma}\nonumber
\\
&=I + II + III \nonumber
\end{align} 
\paragraph{Term $\bfI$} The first term can be directly estimated using the 
interpolation inequality (\ref{interpol}). 
\paragraph{Term $\bfI\bfI$} Changing domain of integration we obtain the estimate
\begin{align}
l(v^l) - l(v)&= \int_\Sigma f v^l \ds - \int_{\Sigma_h} f^e v \dsh
\\
&=\int_{\Sigma_h} f^e v |\bfB| \dsh - \int_{\Sigma_h} f^e v \dsh
\\
&=
\int_{\Sigma_h} f^e v (|\bfB| - 1 ) |\bfB|^{-1} |\bfB| \dsh
\\
&\lesssim  h^2 \| f^e \|_{\Sigma_h}\| v \|_{\Sigma_h}
\\
&\lesssim  h^2 \| f^e \|_{\Sigma_h}\| \nabla_{\Sigma_h} v \|_{\Sigma_h}
\\
&\lesssim h^2 \| f \|_{\Sigma}\| \nabla_{\Sigma} v \|_{\Sigma}
\end{align}
where we used the estimates (\ref{detBbounds}), 
the Poincar\'e inequality in Lemma \ref{lemmapoincaresigmah} 
on $\Sigma_h$, and finally we mapped from $\Sigma_h$ to $\Sigma$. 

\paragraph{Term $\bfI\bfI\bfI$} Changing domain of integration we obtain
\begin{align}\nonumber
&\int_{\Sigma} \nablas u_h^l \cdot \nablas v^l \ds 
      - \int_{\Sigma_h} \nablash u_h \cdot \nablash v \dsh
\\
&\qquad = \int_{\Sigma_h} \bfB^{-T} \nablash u_h \cdot \bfB^{-T}  \nablash v |\bfB| \dsh
- \int_{\Sigma_h} \nablash u_h \cdot \nablash v \dsh
\\
&\qquad = \int_{\Sigma_h} (\bfB^{-1} \bfB^{-T} - |\bfB|^{-1}\bfI) \nablash u_h \cdot \nablash v |\bfB| \dsh
\\
&\qquad = \int_{\Sigma_h} (\bfI - |\bfB|^{-1}\bfB \bfB^T) \bfB^{-T}\nablash u_h 
\cdot \bfB^{-T} \nablash v |\bfB| \dsh
\\
&\qquad \leq \| \bfI - |\bfB|^{-1} \bfB \bfB^T \|_{L^\infty(\Sigma_h)} 
\tn u_h^l \tn_\Sigma \tn v^l \tn_\Sigma
\\
&\qquad \lesssim h^2 \tn u_h^l \tn_\Sigma \tn v^l \tn_\Sigma
\\
&\qquad \lesssim h^2 \| f \|_\Sigma \tn v^l \tn_\Sigma
\end{align}
where at last we used the stability estimate
\begin{equation}
\tn u_h^l \tn_\Sigma 
\lesssim  
\tn u_h \tn_{\Sigma_h} 
\lesssim 
\|f^e \|_{\Sigma_h}
\lesssim 
\|f\|_{\Sigma}
\end{equation}
for the method. Furthermore, we used (\ref{Bestimates}) and (\ref{detBbounds}) to show the following 
estimate
\begin{align}
&\| \bfI - |\bfB|^{-1}\bfB \bfB^T \|_{L^\infty(\Sigma_h)} 
= \||\bfB|^{-1} (|\bfB| \bfI - \bfB \bfB^T) \|_{L^\infty(\Sigma_h)}
\\
&\qquad \leq 
\| |\bfB|^{-1}\|_{L^\infty(\Sigma_h)} \left(\| |\bfB| - 1\|_{L^\infty(\Sigma_h)} 
+  \|\bfI - \bfB \bfB^T \|_{L^\infty(\Sigma_h)} \right) \lesssim h^2
\end{align}
Finally, collecting the estimates of Terms $I-III$ the proof follows.
\end{proof}

\begin{thm} The following a priori error estimate holds
\begin{equation}
\| u - u_h^l \|_\Sigma \lesssim h^2 \| f \|_{\Sigma}
\end{equation}
for $0<h\leq h_0$ with $h_0$ small enough.
\end{thm}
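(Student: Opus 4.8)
The plan is to use a duality argument (Aubin--Nitsche) adapted to the fact that the continuous and discrete surfaces differ. First I would introduce the dual problem on $\Sigma$: find $\phi \in H^1(\Sigma)/\IR$ such that $a(v,\phi) = (u - u_h^l, v)_\Sigma$ for all $v \in H^1(\Sigma)/\IR$, noting that $u - u_h^l$ has zero average on $\Sigma$ (since $u_h$ has zero average on $\Sigma_h$ one must check this carefully, possibly after subtracting a mean-value correction), and invoke elliptic regularity $\|\phi\|_{2,\Sigma} \lesssim \|u - u_h^l\|_\Sigma$. Then $\|u - u_h^l\|_\Sigma^2 = a(u - u_h^l, \phi) = a(u - u_h^l, \phi - \pi_h^l \phi) + a(u - u_h^l, \pi_h^l\phi)$, where the first term is handled by Cauchy--Schwarz in the energy norm, the energy estimate of Theorem \ref{thmenergy}, and the interpolation estimate in Lemma \ref{lem:approx} applied to $\phi$, giving a contribution $\lesssim h^2 \|f\|_\Sigma \|u - u_h^l\|_\Sigma$.

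The remaining term $a(u - u_h^l, \pi_h^l \phi)$ requires Galerkin-type orthogonality, but since the method is posed on $\Sigma_h$ rather than $\Sigma$ this is only an approximate orthogonality. I would write $a(u, \pi_h^l\phi) = l(\pi_h^l\phi)$ using the continuous weak form, and $a_h(u_h, \pi_h\phi^e) + j_h(u_h, \pi_h\phi^e) = l_h(\pi_h\phi^e)$ using the discrete method with test function $\pi_h\phi^e \in \mcV_h$, and then combine these. The difference $a(u_h^l, \pi_h^l\phi) - a_h(u_h, \pi_h\phi^e)$ is a geometric-error term that is controlled exactly as Term $III$ in the proof of Theorem \ref{thmenergy}: it is bounded by $\| \bfI - |\bfB|^{-1}\bfB\bfB^T\|_{L^\infty(\Sigma_h)} \tn u_h^l\tn_\Sigma \tn(\pi_h\phi^e)^l\tn_\Sigma \lesssim h^2 \|f\|_\Sigma \|\phi\|_{2,\Sigma}$. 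Similarly the difference $l(\pi_h^l\phi) - l_h(\pi_h\phi^e)$ is the geometric right-hand-side error handled as Term $II$, bounded by $h^2\|f\|_\Sigma\|\phi\|_{1,\Sigma}$. The stabilization contribution $j_h(u_h, \pi_h\phi^e)$ must also be estimated: using Cauchy--Schwarz in the $j_h$-seminorm, $\tn u_h\tn_{\mcF_h} \lesssim \|f\|_\Sigma$ from the method's stability, and $\tn\pi_h\phi^e\tn_{\mcF_h} \lesssim \tn\phi^e - \pi_h\phi^e\tn_{\mcF_h} \lesssim h\|\phi\|_{2,\Sigma}$ (the consistency of $j_h$ on the exact smooth function together with Lemma \ref{lem:approx}), giving a term $\lesssim h\|f\|_\Sigma h\|\phi\|_{2,\Sigma}$.

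Collecting everything, each contribution carries a factor $h^2 \|f\|_\Sigma \|\phi\|_{2,\Sigma}$ (using elliptic regularity to replace $\|\phi\|_{1,\Sigma}$ by $\|\phi\|_{2,\Sigma}$ where needed), so $\|u - u_h^l\|_\Sigma^2 \lesssim h^2\|f\|_\Sigma \|\phi\|_{2,\Sigma} \lesssim h^2\|f\|_\Sigma\|u - u_h^l\|_\Sigma$, and dividing by $\|u - u_h^l\|_\Sigma$ yields the claim. The main obstacle I anticipate is bookkeeping the geometric (non-)consistency: the dual test function $\pi_h^l\phi$ does not literally lie in the discrete space, and $\pi_h\phi^e$ lives on $\Sigma_h$, so one has to lift carefully and track that every mismatch between integration over $\Sigma$ and over $\Sigma_h$ produces an $O(h^2)$ factor via the bounds \eqref{Bestimates} and \eqref{detBbounds}; a secondary subtlety is confirming the zero-mean compatibility of $u - u_h^l$ so the dual problem is well posed, which may need an explicit average correction whose contribution is itself $O(h^2)$.
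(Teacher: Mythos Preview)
Your approach is essentially the paper's: Aubin--Nitsche duality on $\Sigma$, split $a(u-u_h^l,\phi)$ into an interpolation piece and a piece handled via approximate Galerkin orthogonality, the latter reducing to the geometric consistency estimates already established as Terms $II$ and $III$ in the proof of Theorem~\ref{thmenergy}. The paper also handles the zero-mean issue exactly as you anticipate, by introducing $\tilde u_h = u_h - |\Sigma|^{-1}\int_\Sigma u_h^l\,d\sigma$ and bounding $\|u_h^l - \tilde u_h^l\|_\Sigma \lesssim h^2\|f\|_\Sigma$ directly via $\|1-|\bfB|^{-1}\|_{L^\infty(\Sigma_h)} \lesssim h^2$.

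One slip to correct: for the stabilization contribution you invoke only the stability bound $\tn u_h\tn_{\mcF_h} \lesssim \|f\|_\Sigma$, which together with $\tn \pi_h\phi^e\tn_{\mcF_h}\lesssim h\|\phi\|_{2,\Sigma}$ gives only $O(h)$, not the $O(h^2)$ you then claim. The fix (and what the paper does) is to use the energy-norm error estimate of Theorem~\ref{thmenergy} instead: since $u^e$ is smooth its normal-gradient jumps vanish, so $\tn u_h\tn_{\mcF_h} = \tn u_h - u^e\tn_{\mcF_h} \lesssim h\|f\|_\Sigma$, and then the product is $\lesssim h^2\|f\|_\Sigma\|\phi\|_{2,\Sigma}$ as required.
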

\begin{proof}
Recall that $u_h$ satisfies $\int_{\Sigma_h} u_h d\sigma_h = 0$ 
and define $\tilde{u}_h \in V_h$ such that
\begin{equation}
\tilde{u}_h = u_h - |\Sigma|^{-1}\int_\Sigma u_h^l d\sigma
\end{equation}
Then $\int_\Sigma \tilde{u}_h^l d\sigma = 0$ and we have the estimate
\begin{equation}
\|u - u_h^l \|_\Sigma \leq \|u - \tilde{u}_h^l \|_\Sigma
+
\|\tilde{u}_h - {u}_h^l \|_\Sigma = I + II
\end{equation}
\paragraph{Term $\bfI$}
Let $\phi$ be the solution to the dual problem $-\Delta_\Sigma \phi = \psi \in L^2(\Sigma)/\IR$. Then it follows from the Lax-Milgram lemma that 
there exists a unique solution in $H^1(\Sigma)/\IR$ and we also have 
the elliptic regularity estimate $\| \phi \|_{2,\Sigma} \lesssim \|\psi\|_\Sigma$. In order to estimate $\|u - \tilde{u}_h^l\|_\Sigma$ we multiply 
the dual problem by $u - \tilde{u}_h^l$, integrate using Green's 
formula, and add and subtract suitable terms 
\begin{align}
(u - \tilde{u}_h^l, \psi ) &= a(u - \tilde{u}_h^l,\phi)
\\
&=a(u - \tilde{u}_h^l,\phi - \pi_h^l \phi ) 
+ a(u - \tilde{u}_h^l,\pi_h^l \phi )
\\
&=a(u - \tilde{u}_h^l,\phi - \pi_h^l \phi ) 
+ \left( l(\pi_h^l \phi ) - l_h(\pi_h \phi^e ) \right) 
\\ \nonumber
&\qquad + \left( a_h(\tilde{u}_h, \pi_h \phi^e) - a(\tilde{u}_h^l,\pi_h^l \phi )\right)
+ j_h(\tilde{u}_h-u^e,\pi_h \phi^e - \phi^e)
\end{align}
These terms may now be estimated using Cauchy-Schwarz, the energy norm estimate (\ref{eqenergy}), together with the estimates of terms $II$ 
and $III$ in the proof of Theorem \ref{thmenergy}. We note in particular that
\begin{equation}
|a_h(\tilde{u}_h, \pi_h \phi^e) - a(\tilde{u}_h^l,\pi_h^l \phi )|\lesssim h^2 \tn \tilde{u}_h^l \tn_\Sigma \tn \pi_h^l\phi \tn_\Sigma
\lesssim h^2 
\end{equation}
Here the first term is estimated by observing that 
\begin{equation}
\tn \tilde{u}_h^l \tn_\Sigma = \tn {u}_h^l \tn_\Sigma 
\lesssim \tn u_h \tn_{\Sigma_h} \lesssim \|f\|_{\Sigma_h}
\end{equation}
and the second term $\tn \pi_h^l\phi \tn_\Sigma$ using 
Lemma \ref{lem:approx}.

\paragraph{Term $\bfI\bfI$} Using the fact that 
$\int_{\Sigma_h} u_h d\sigma_h = 0$ we obtain
\begin{align}
&\| u_h^l - \tilde{u}_h^l \|_\Sigma 
\lesssim \left| \int_\Sigma u_h^l d\sigma 
-  \int_{\Sigma_h} u_h d \sigma_h \right|
 \lesssim \left| \int_{\Sigma} u_h^l (1 - |\bfB|^{-1}) d\sigma_h \right|
\\
&\qquad
\lesssim h^2 \|u_h^l \|_{\Sigma}
\lesssim h^2 \|\nabla_{\Sigma} u_h^l \|_{\Sigma}
\lesssim h^2 \|\nabla_{\Sigma_h} u_h \|_{\Sigma_h}
\lesssim h^2 \|f\|_{\Sigma_h}
\end{align}

Combining the estimates of $I$ and $II$ we obtain the desired 
estimate.

\end{proof}
\section{Numerical Examples}

\subsection{Condition Number\label{condest2d}}

In order to assess the effect of our stabilization method on the condition number, we discretize a circle, solve the eigenvalue problem of the Laplace-Beltrami operator and sort these in ascending order.
The problem is posed so that the integral of the solution is set to zero by use of a Lagrange multiplier (for well-posedness). The first eigenvalue (corresponding
to the multiplier) is then negative. In the unstabilized method the next eigenvalue is zero with eigenfunction equal to the discrete piecewise 
linear distance function used to define the circle. This is due to our choice to define the circle using a level set function on the same mesh 
used for computations; this particular problem can be avoided using a 
finer mesh for the level set function. We illustrate the effect by showing the zero isoline of the level set function (thick line) together with the isolines of the eigenfunction corresponding to the zero eigenvalue in 
Figure  \ref{zeroeigen}. To avoid this effect, we base the condition 
number on the quotient between the largest eigenvalue and the first 
positive eigenvalue. For the unstabilized method, the first nonzero eigenvalue is thus the third, whereas for the stabilized method it is the second. In the stabilized method we used $\tau_0 = 1/10$ for all computations.

In Fig. \ref{mesh2d} we show the initial position of the circle in a 2D mesh. The circle is then moved to the left, to end up a distance $\delta = 0.1$ to the left of its original position. For each increment $\Delta\delta = 1/100$, we plot the condition numbers of the two methods, shown in Fig. \ref{cond2d}. Note the large variation in condition number of the unstabilized method.

\subsection{Convergence and Conditioning Comparisons}

For our convergence/conditioning comparison, we discretize a sphere of radius $1/2$ 
with center at $(1/2,1/2,1/2)$ and with a load 
\begin{equation}
 f=\frac{6(2x-1)(2y-1)(2z-1)}{3+4x(x-1)+4y(y-1)+4z(z-1)}
\end{equation}
corresponding to the exact solution
\begin{equation}
u=(x-1/2)(y-1/2)(z-1/2)
\end{equation}
compute $f$ on $\Sigma_h$ to solve for $u_h$, and define an approximate $L^2$--error as
\begin{equation}
e_h := \| u_h - {u^e}\|_{L^2(\Sigma_h)}
\end{equation}
A plot of the approximate (unstabilized) solution on a coarse mesh is given in  Figure \ref{solution},
shown on the planes intersected by the level set function.
We compare the error for the stabilized (using different values for $\tau_0$) and unstabilized methods in 
Figure \ref{errors}, where NDOF stands for the total number of degrees
of freedom on the active tetrahedra, so that $h\simeq \text{NDOF}^{-1/2}$.
Note that the error constant is slightly worse for the stabilized methods 
but that all choices converge at the optimal rate of $O(h^2)$. The numbers underlying Figure \ref{errors} are given in Table \ref{table:conv}, where $N$ stands for the number of unknowns, the errors $e_h$ are listed for different $\tau_0$, and $R$ is the rate of convergence.

In Figure \ref{conditioning} we show the condition number computed for the same problem (with the same approach as in Section \ref{condest2d}) with different choices for $\tau_0$ and also for the preconditioning by diagonal scaling suggested in \cite{OlRe10}.
No stabilization results in a condition number that grows faster than the standard rate of $O(h^{-2})$. 
The condition number is most improved by diagonal scaling. Note, however, that diagonal scaling does not remedy the zero eigenvalue induced by the level set.

The numbers underlying Figure \ref{conditioning} are given in Table \ref{table:cond}.

\begin{table}[ht]
\begin{center}
\begin{tabular}{| r | r | r | r | r | r | r |r | r |}
  \hline
N & $\tau_0=1$ & $R$& $\tau_0=0.1$ & $R$ & $\tau_0=0.01$ & $R$ & $\tau_0=0$ & $R$\\
  \hline \hline
  406 & 0.0142 & - & 0.0052 & - & 0.00230 & - & 0.00190 & -  \\ \hline
  1513 & 0.0078 & 0.91   & 0.0017 & 1.70   & 0.00070 & 1.82 & 0.00057 & 1.82   \\ \hline
 6013 & 0.0028  & 1.49  & 0.0004  & 1.93  & 0.00018  & 1.98 & 0.00014 & 2.01  \\ \hline
 24071 & 0.0008  & 1.82  & 0.0001  & 1.97  & 0.00004  & 2.03 & 0.00003 & 2.05  \\ \hline
\end{tabular}
\end{center}
\caption{Errors and convergence for different $\tau_0$\label{table:conv}}
\end{table}

\begin{table}[ht]
\begin{center}
\begin{tabular}{| r | r | r | r | r | r | r |r | r |}
  \hline
N & $\tau_0=1$ & $R$& $\tau_0=0.01$ & $R$ & $\tau_0=0$ & $R$ & Pre & $R$\\
  \hline \hline
  406 & 0.5383 & - & 0.1038 & - & 0.2044 & - & 0.0170 & -  \\ \hline
  1513 & 1.3350 & -1.38   & 0.2001 & -1.00   & 0.4036 & -1.03 & 0.0600 & -1.92   \\ \hline
 6013 & 5.5484  & -2.06  & 0.7595  & -1.93  & 3.5110  & -3.14 & 0.2175 & -1.87  \\ \hline
 24071 & 22.359  & -2.01  & 2.9865  & -1.97  & 69.530  & -4.31 & 0.9354 & -2.10  \\ \hline
\end{tabular}
\end{center}
\caption{Condition numbers $\times 10^{-4}$ and rate for different $\tau_0$ and for diagonal preconditioning.\label{table:cond}}
\end{table}

\section*{Acknowledgements}
{This research was supported in part by EPSRC, UK, Grant No. EP/J002313/1, the Swedish Foundation for Strategic Research Grant No.\ AM13-0029, and the Swedish Research Council Grants Nos.\ 2011-4992 and 2013-4708.}

\section*{References}

\newpage
\begin{figure}[h]
\begin{center}
\includegraphics[height=9cm]{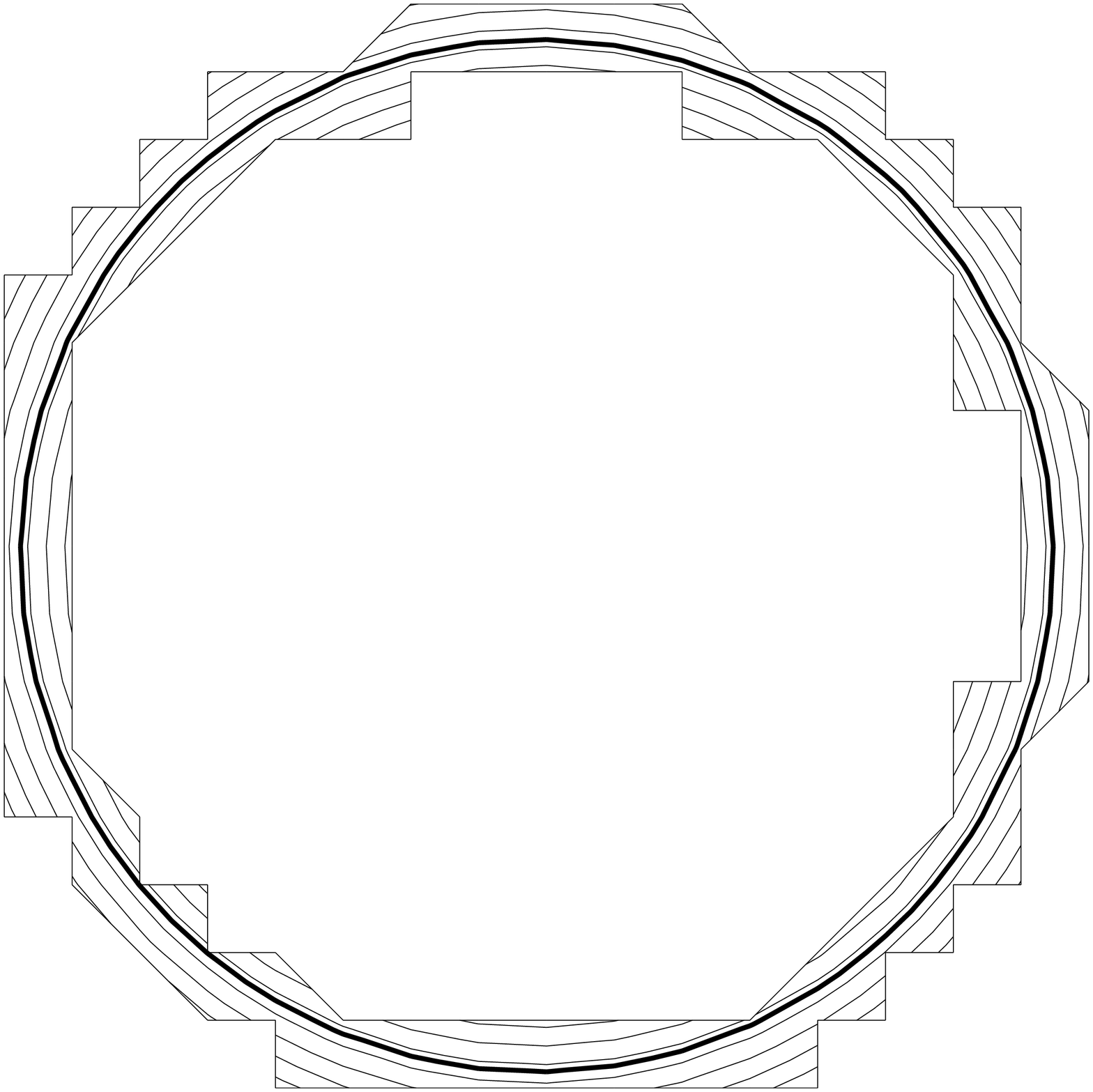}
\end{center}
\caption{Isolines of the eigenfunction corresponding to the zero eigenvalue in the unstabilized method. Level set defining the circle drawn thicker.\label{zeroeigen}}
\end{figure}
\begin{figure}[h]
\begin{center}
\includegraphics[height=9cm]{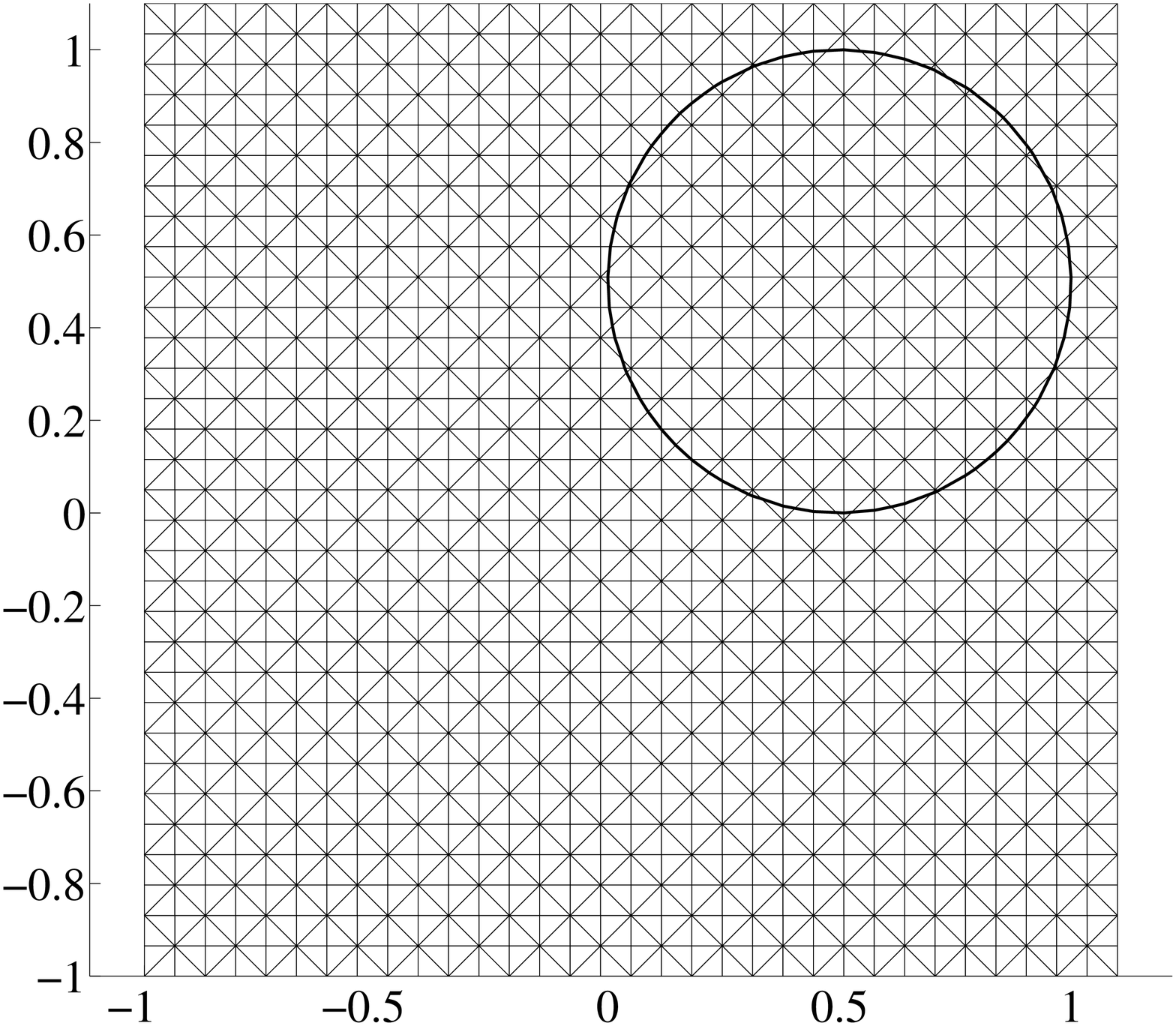}
\end{center}
\caption{Level set isoline used to define the domain in a 2D mesh; initial position.\label{mesh2d}}
\end{figure}
\begin{figure}[h]
\begin{center}
\includegraphics[height=9cm]{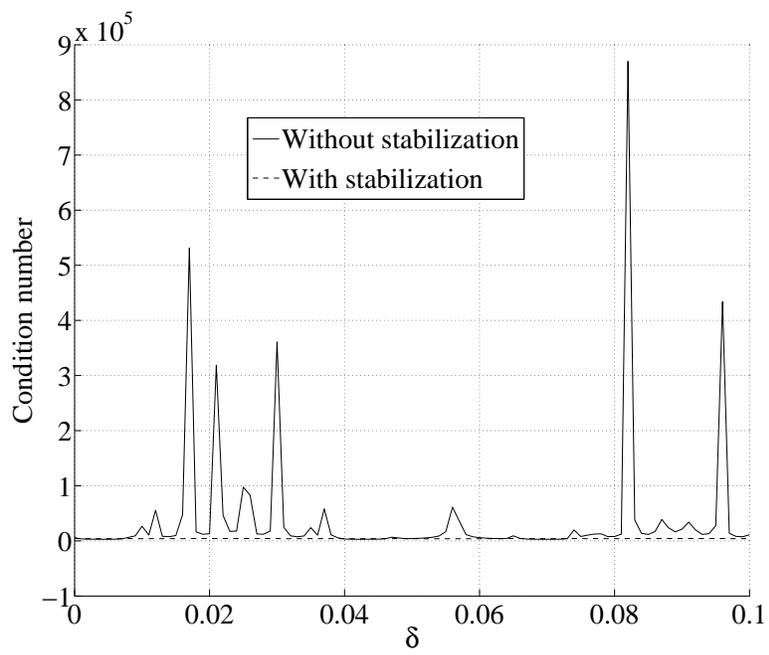}
\end{center}
\caption{Condition numbers for the stabilized and unstabilized methods.\label{cond2d}}
\end{figure}
\begin{figure}[h]
\begin{center}
\includegraphics[height=9cm]{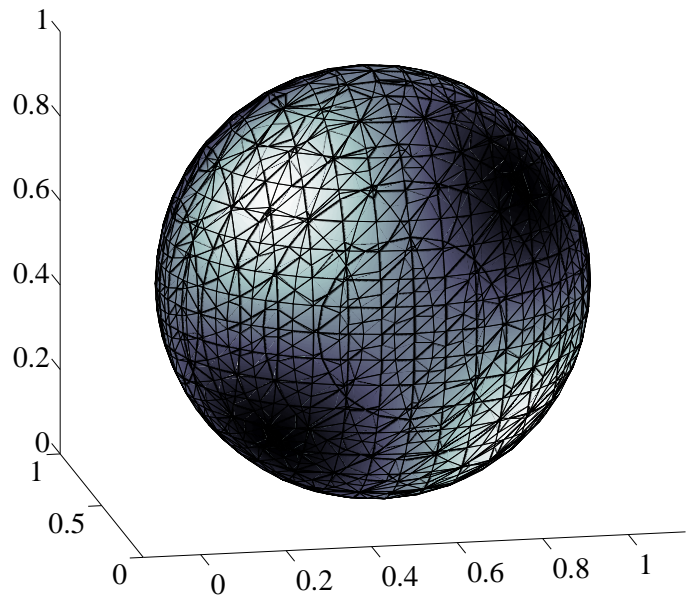}
\end{center}
\caption{Discrete solution on a coarse mesh.\label{solution}}
\end{figure}
\begin{figure}[h]
\begin{center}
\includegraphics[height=9cm]{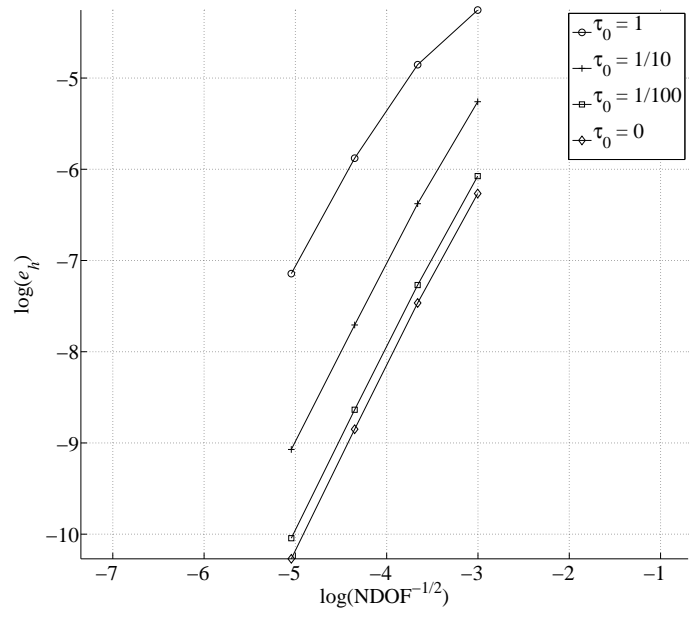}
\end{center}
\caption{Convergence for different choices of $\tau_0$.\label{errors}}
\end{figure}
\begin{figure}[h]
\begin{center}
\includegraphics[height=9cm]{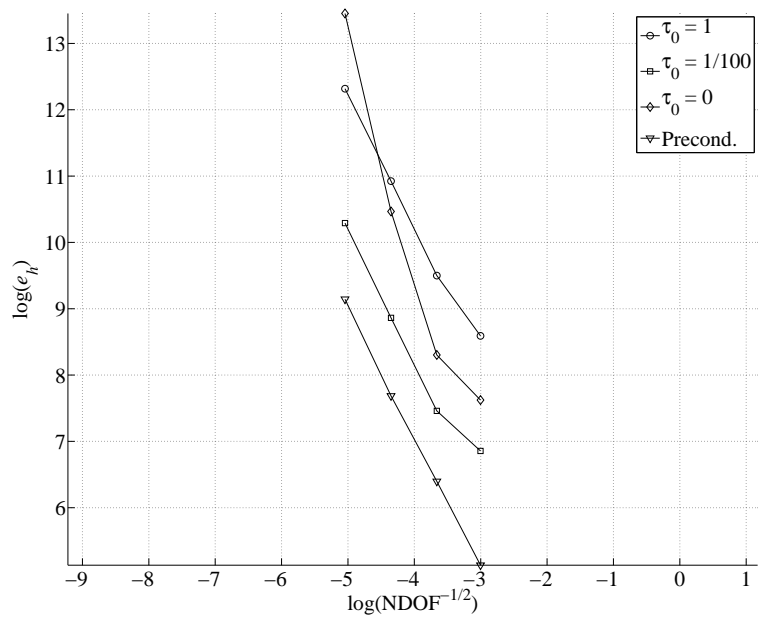}
\end{center}
\caption{Condition numbers for different choices of $\tau_0$ and for preconditioning by diagonal scaling.\label{conditioning}}
\end{figure}

\end{document}